\newcommand{\ol}{\overline}
\newcommand{\ul}{\underline}
\newtheorem{theorem}{Theorem}[section]
\newtheorem{lemma}[theorem]{Lemma}
\newtheorem{proposition}[theorem]{Proposition}
 \theoremstyle{definition}
\theoremstyle{remark}
\numberwithin{equation}{section}
\begin{document}

\title[Degenerate Hessian equations]
{Second order estimates for convex solutions of degenerate $k$-Hessian equations}

\author{Heming Jiao}
\address{School of Mathematics and Institute for Advanced Study in Mathematics, Harbin Institute of Technology,
         Harbin, Heilongjiang 150001, China}
\email{jiao@hit.edu.cn}

\author{Zhizhang Wang}
\address{School of Mathematical Sciences, Fudan University, Shanghai, China}
\email{zzwang@fudan.edu.cn}
\thanks{The first author is supported by the NSFC (Grant No. 11871243). Research of the second author is sponsored by Natural Science  Foundation of Shanghai, No.20JC1412400,  20ZR1406600 and supported by NSFC Grants No. 11871161,12141105.}

\begin{abstract}

The $C^{1,1}$ estimate of the Dirichlet problem for degenerate $k$-Hessian equations with non-homogenous boundary conditions is an open problem, if
the right hand side function $f$ is only assumed to satisfy $f^{1/(k-1)} \in C^{1,1}$. In this paper, we solve this problem for convex solutions defined in the strictly convex bounded domain.

{\em Keywords:} Degenerate $k$-Hessian equations; Second order estimates; convex solutions.

\end{abstract}

\maketitle

\section{Introduction}
Suppose $u$ is some function defined in a bounded domain $\Omega\subset \mathbb{R}^n$ and $\varphi$ is some given function defined on the boundary $\partial\Omega$. In this paper, we concern the  Dirichlet problem
of degenerate $k$-Hessian equations with non-homogenous boundary functions ($k \geq 2$)
\begin{equation}
\label{1-1}
\left\{ \begin{aligned}
   \sigma_k \big(\lambda(D^{2} u)\big) & = f  \;\;\mbox{ in }~ \Omega, \\
                 u &= \varphi  \;\;\mbox{ on }~ \partial \Omega,
\end{aligned} \right.
\end{equation}
where $f \geq 0$ in $\Omega$ and
$\sigma_{k}$ are the elementary symmetric functions
\[
\sigma_{k} (\lambda) = \sum_ {i_{1} < \ldots < i_{k}}
\lambda_{i_{1}} \ldots \lambda_{i_{k}},\ \ k = 1, \ldots, n,
\]
and $\lambda(D^2u)$ is the eigenvalue vector of the hessian $D^2u$.
The Poisson equation
and Monge-Amp\`{e}re equation
fall into the form of \eqref{1-1} as $k = 1$ and $k = n$ respectively. We call a function
$u \in C^2 (\Omega)$ is $k$-convex if $\lambda (D^2 u) \in \ol \Gamma_k$ in $\Omega$, where $\Gamma_k$ is
the G{\aa}rding's cone
\[
\Gamma_{k} = \{\lambda \in \mathbb{R}^{n}: \sigma_{j} (\lambda) > 0, j = 1, \ldots, k\}.
\]
A bounded domain $\Omega$ in $\mathbb{R}^n$ is called uniformly $(k-1)$-convex, if there is a positive constant
$K$ such that for each $x \in \partial \Omega$,
\[
(\kappa_1 (x), \ldots, \kappa_{n - 1} (x), K) \in \Gamma_k,
\]
where $\kappa_1 (x), \ldots, \kappa_{n - 1} (x)$ are the principal curvatures of $\partial \Omega$ at
$x$.

The central issue of the degenerate equations is the existence of $C^{1,1}$ solution, which major needs  the a prior $C^{1,1}$ estimates of the solutions. Unlike the non degenerate equations, the establishment of $C^{1,1}$ estimate always requires some regularity of $f$ near the set of $f=0$. In \cite{GL1} and \cite{GL2}, Guan-Li have studied the degenerate Weyl problem and degenerate Gauss curvature measure problem, which both are degenerate Monge-Amp\`ere type equations. They found that the conditions
\begin{eqnarray}\label{cond1}
\Delta (f^{1/(n-1)})\geq -A \text{ and } |D (f^{1/(n-1)})|\leq A
\end{eqnarray}
for some constant $A$, are sufficient to get the global $C^{1,1}$ bound. Soon after, for the Dirichlet problem of the degenerate Monge-Amp\`ere equations, Guan \cite{Guan97} found that the above conditions also are sufficient to get the $C^{1,1}$ boundary estimates, if we only consider the homogenous boundary problem. For the non homogenous boundary problem,  Guan-Trudinger-Wang \cite{GTW99}  established the $C^{1,1}$ boundary estimate, if one require
\begin{eqnarray}\label{cond2}
f^{1/(n-1)}\in C^{1,1}(\overline{\Omega}).
\end{eqnarray}
It is not difficult to see, \eqref{cond2} implies \eqref{cond1}.

In view of the above results, a natural and interesting question is that can we establish the $C^{1,1}$ estimates for $k$-convex solutions of \eqref{1-1}, including boundary estimates  and global estimates, by using the condition
\begin{eqnarray}\label{condition}
f^{1/(k-1)}\in C^{1,1}(\overline{\Omega}).
\end{eqnarray}
Note that if $k=n$,  \eqref{condition} is \eqref{cond2}.  This question is first proposed by Ivochkina-Trudinger-Wang \cite{ITW2004},
which is not solved until now.

Let's review some related research of the above question.
Write $\widetilde{f} := f^{1/(k-1)}$. Dong \cite{Dong06} has considered the homogenous Dirichlet problem \eqref{1-1} with homogenous boundary condition $\varphi \equiv 0$. He established
the $C^2$ estimates by \eqref{condition} and
\begin{equation}
\label{strong}
|D \widetilde{f}| \leq C \widetilde{f}^{1/2} \mbox{ on } \ol \Omega
\end{equation}
for some positive constant $C$. In \cite{JW}, by using \eqref{condition}, the authors obtain the  $C^{1,1}$ estimate  for  the Dirichlet problem of degenerate $k$-curvature equations with homogenous boundary condition, which can be view as a generalization of \cite{Dong06}.
If we require $f^{1/k} \in C^{1,1} (\ol \Omega)$, which is a little stronger than \eqref{condition}, for non homogenous boundary problem,
the $C^{1, 1}$ regularity has been established by Krylov \cite{K94a, K94b, K95a, K95b}, seeing an alternative proof
by Ivochkina-Trudinger-Wang \cite{ITW2004}.
In \cite{Wang95},  Wang gave an example, which shows that the condition \eqref{condition} is optimal for the solutions of $k$-Hessian equations being in $C^{1,1}(\overline{\Omega})$. Therefore, condition \eqref{condition} should be sharp.
For more reference, the reader may see \cite{CNS86, Guan98, WX14}  and the reference therein.

The main results of this paper is to give the boundary estimate for convex solutions of \eqref{1-1}  with non homogenous boundary functions, only using condition \eqref{condition}. Note that, for $k$-Hessian equations, the assumption of convex  solutions has been used in \cite{GRW,GQ, MSY}.  Our result partially answers Ivochkina-Trudinger-Wang's question.

\begin{theorem}
\label{conj2}
Suppose $\Omega$ is uniformly convex with $\partial \Omega \in C^{3, 1}$, $\varphi \in C^{3,1} (\partial \Omega)$,
$f > 0$ in $\Omega$ and \eqref{condition}.
Then any convex solution $u \in C^3 (\Omega) \cap C^2 (\ol \Omega)$
to the Dirichlet problem \eqref{1-1} satisfies the estimates
\begin{equation}
\label{main-estimate1}
\max_{\partial \Omega}|D^2 u| \leq C,
\end{equation}
where the positive constant $C$ depends on $n$, $k$, $\Omega$, $|\varphi|_{C^{3,1} (\partial \Omega)}$ and $|f^{1/(k-1)}|_{C^{1,1} (\ol \Omega)}$
but is independent of the lower bound $\inf_\Omega f$.
\end{theorem}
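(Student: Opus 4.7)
The plan is to bound each of the three blocks of $D^2u(0)$ at a fixed boundary point $x_0=0\in\partial\Omega$ by boundary flattening plus three substeps. After choosing coordinates so that the inward normal is $e_n$ and $\partial\Omega$ is locally $x_n=\rho(x')$ with $\rho(0)=0$, $D'\rho(0)=0$, first secure a uniform bound $|u|_{C^1(\bar\Omega)}\leq C$, which is standard for convex $u$ with $C^{0,1}$ boundary data via comparison with supporting affine functions. The tangential-tangential piece is then immediate: differentiating $u(x',\rho(x'))=\varphi(x',\rho(x'))$ twice in the tangential variables and evaluating at $0$ yields
\[
u_{\alpha\beta}(0)=\varphi_{\alpha\beta}(0)-(u_n-\varphi_n)(0)\,\rho_{\alpha\beta}(0),\qquad 1\leq\alpha,\beta<n,
\]
which is controlled by the assumed regularity of $\partial\Omega$ and $\varphi$.

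For the mixed derivatives $u_{\alpha n}(0)$, $\alpha<n$, adapt the Guan--Trudinger--Wang barrier strategy of \cite{GTW99} to the $k$-Hessian. Apply the linearized operator $L:=F^{ij}\partial_{ij}$ with $F^{ij}=\partial\sigma_k/\partial u_{ij}(D^2u)\geq 0$ to a tangential derivative $w$ of $u-\varphi$, chosen so that $w$ vanishes on $\partial\Omega$ near $0$. Commuting $L$ with the equation produces a right-hand side schematically of the form $\pm f_\alpha + F^{ij}(\text{lower order})$, and the goal is to dominate this by $L\Psi$ for an auxiliary function $\Psi$ built from the defining function $d=x_n-\rho(x')$, a quadratic term $|x'|^2$, and the degenerate factor $\widetilde f=f^{1/(k-1)}$. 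If $\Psi$ is calibrated so that $L(\Psi\pm w)\leq 0$ on a small half-ball $\Omega\cap B_\delta(0)$ and $\Psi\pm w\geq 0$ on its boundary, the maximum principle forces $\partial_n(\Psi\pm w)(0)\geq 0$, giving $|u_{\alpha n}(0)|\leq C$. The two analytic inputs that make this work uniformly in the degenerate regime are the pointwise inequalities $|D\widetilde f|^{\,2}\leq C\widetilde f$ and $\|D^2\widetilde f\|_\infty\leq C$, both direct consequences of \eqref{condition} (the first being the standard $C^{1,1}$-nonnegativity estimate for $\widetilde f$), which allow the troublesome term $f_\alpha=(k-1)\widetilde f^{k-2}\widetilde f_\alpha$ to be absorbed into $L\Psi$.

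With the tangential and mixed blocks in hand, Laplace expansion of $\sigma_k(D^2u)(0)=f(0)$ along the last row and column gives
\[
u_{nn}(0)\,\sigma_{k-1}(A')=f(0)-\sigma_k(A')+\sum_{|T|=k-1}b_T^{\top}\,\mathrm{adj}(A'_T)\,b_T,
\]
where $A'=(u_{\alpha\beta}(0))_{\alpha,\beta<n}$ and $b=(u_{\alpha n}(0))_{\alpha<n}$ are already controlled and $\mathrm{adj}(A'_T)$ denotes the adjugate of the $(k-1)\times(k-1)$ principal block indexed by $T$. Convexity of $u$ forces $u_{nn}(0)\geq 0$, so only an upper bound is in question, and if $\sigma_{k-1}(A')\geq c_0>0$ it follows immediately. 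The main obstacle of the theorem is the genuinely degenerate case $\sigma_{k-1}(A')\to 0$, where the equation no longer pins $u_{nn}$. I expect to close this case by building a local supersolution from $\widetilde f$ and the boundary geometry, comparing $u$ with it, and exploiting convexity together with $|D\widetilde f|^{\,2}\leq C\widetilde f$ to absorb the lost ellipticity. Handling this degeneracy without any lower bound on $f$, and without the favorable cofactor sign-structure specific to the Monge--Amp\`ere case $k=n$, is precisely what distinguishes the present convex setting from \cite{GTW99} and from the homogeneous-boundary analysis in \cite{JW}, where $\varphi\equiv 0$ circumvents the degenerate-normal issue entirely.
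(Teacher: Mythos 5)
Your plan for the tangential and mixed blocks matches the paper's Section 4, and your identification of the real obstacle --- the regime $\sigma_{k-1}(A')\to 0$, where the expansion of $\sigma_k$ along the normal direction fails to pin $u_{nn}$ --- is accurate. But the final step is where the theorem actually lives, and the proposal replaces it with a wish (``build a local supersolution from $\widetilde f$ and the boundary geometry, \dots absorb the lost ellipticity'') rather than a mechanism. In fact the $O(1)$ bound on $u_{\alpha n}$ that your barrier argument yields is not sufficient even to begin the division by $\sigma_{k-1}(A')$: writing $b=(b_1,\dots,b_{n-1})$ for the tangential eigenvalues, the cofactor identity reads
\begin{equation*}
u_{nn}\,\sigma_{k-1}(b) \;=\; f - \sigma_k(b) + \sum_{\alpha<n} u_{\alpha n}^2\,\sigma_{k-2;\alpha}(b),
\end{equation*}
and with only $|u_{\alpha n}|\le C$ the ratio $\sum_\alpha u_{\alpha n}^2\sigma_{k-2;\alpha}(b)/\sigma_{k-1}(b)$ need not be bounded when some $b_\alpha$ are small. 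Likewise $f/\sigma_{k-1}(b)$ is not obviously bounded.

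The paper closes this gap with two ingredients your outline does not contain. First, a boundary lower bound (Lemma \ref{cor-lemma}): $\sigma_{k-1}(\lambda'[(D^2u)']) \ge \delta_0 f$ on $\partial\Omega$, established via the concavity of $G=\sigma_{k-1}^{1/(k-1)}(\lambda')$ together with a barrier built from a $k$-convex subsolution; this controls $f/\sigma_{k-1}(b)$. Second, a refined mixed-derivative estimate (Lemma \ref{lemma-2}): $|\nabla_{\nu\tau_\beta}u|\le C\sqrt{b_\alpha}$ at boundary points in anisotropic neighborhoods scaled by the tangential eigenvalues, proved by an induction over the scales $b_{n-1}\ge\cdots\ge b_{n-k+1}$ on boxes of the form $\{|x_\beta|<\delta b_\alpha/\sqrt{b_\beta},\ x_n<\delta^2b_\alpha\}$, with convexity of $u$ used essentially to place linear lower barriers and the determinantal upper barrier $\det D^2\ol u=0$. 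Once these two lemmas are available, the cofactor identity gives $u_{nn}(0)\le C$ because $\sum_\alpha u_{\alpha n}^2\sigma_{k-2;\alpha}(b) \le C\,\sigma_{k-1}(b)$. Neither of these steps is a routine supersolution comparison; they are the new content of the theorem, and as written your proposal does not reach them.
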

Theorem \ref{conj2} can be regarded as a generalization of the main results in \cite{GTW99}. The major difficulty to prove \eqref{main-estimate1} is
 the estimates for double normal derivatives which we establish in two
steps. One is Lemma \ref{cor-lemma}, which generalizes Lemma 3.1 of \cite{GTW99}.
We utilize the idea of Trudinger \cite{T}, then we can obtain the same lower bound of tangential-tangential derivatives only assuming that the solutions are $k$-convex for $k$-Hessian equations.
Our method is completely different from \cite{GTW99}.
The other one is an estimate of mixed tangential-normal derivatives in terms of  tangential-tangential derivatives, namely, Lemma \ref{lemma-2}. Similar estimate was
 proved in \cite{GTW99} for Monge-Amp\`{e}re equation,
where the special structure of Monge-Amp\`{e}re equation plays a key role to obtain the bound of the tangential-normal derivatives. Unfortunately, the $k$-Hessian
equations with $2 \leq k \leq n-1$ do not possess such structure. Therefore, we need some new idea to reestablish these estimates without using the affine transformation, which is our novelty of Section 6 and Section 7.

Since the constant in \eqref{main-estimate1} is independent of $\inf_\Omega f$, by approximation and
the $C^1$ estimates, global $C^2$ estimates in section 2, section 3,
we obtain the following $C^{1,1}$ estimates for degenerate $k$-Hessian equations.
\begin{theorem}
\label{conj1}
Suppose $\Omega$ is uniformly convex with $\partial \Omega \in C^{3, 1}$, $\varphi \in C^{3,1} (\partial \Omega)$,
$f \geq 0$ in $\Omega$ and \eqref{condition}.
Then any convex solution $u \in C^4 (\Omega) \cap C^2 (\ol \Omega)$
to the Dirichlet problem \eqref{1-1} satisfies the estimates
\begin{equation}
\label{main-estimate}
|u|_{C^{1,1} (\ol \Omega)} \leq C,
\end{equation}
where the positive constant $C$ depends on $n$, $k$, $\Omega$, $|\varphi|_{C^{3,1} (\partial \Omega)}$ and $|f^{1/(k-1)}|_{C^{1,1} (\ol \Omega)}$.
\end{theorem}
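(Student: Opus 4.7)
The plan is to reduce Theorem \ref{conj1} to Theorem \ref{conj2} by combining it with $C^0$, $C^1$ and global $C^2$ estimates, and to handle the degenerate case $\inf_\Omega f = 0$ by an $\varepsilon$-perturbation of $\tilde f := f^{1/(k-1)}$. The assertion in Theorem \ref{conj2} that the constant in \eqref{main-estimate1} is independent of $\inf_\Omega f$ is exactly what makes this approximation go through.

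For the nondegenerate step, assume first that $f > 0$ on $\ol\Omega$. The $C^0$ bound comes from comparing the convex solution $u$ with a large downward parabola and the harmonic extension of $\varphi$. Since $u$ is convex, $|Du|$ attains its maximum on $\partial\Omega$; the boundary gradient is controlled through a linear upper barrier built from $\varphi$ and a $k$-subsolution lower barrier of the form $\varphi + A(|x - x_0|^2 - d(x))$, exploiting uniform convexity of $\Omega$. The boundary $C^2$ bound is precisely Theorem \ref{conj2}. To reduce $\max_\Omega |D^2 u|$ to $\max_{\partial\Omega} |D^2 u|$, I would apply the standard maximum principle of Sections 2 and 3 to an auxiliary function of the form $W = \log \lambda_1(D^2 u) + \phi(|Du|^2) + \psi(u)$; differentiating the equation in the normalized form $\sigma_k^{1/(k-1)}(\lambda(D^2 u)) = \tilde f$ ensures that the resulting interior bound depends only on $|\tilde f|_{C^{1,1}}$ and is independent of $\inf_\Omega f$.

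For the general case $f \geq 0$, set $\tilde f_\varepsilon := \tilde f + \varepsilon$ and $f_\varepsilon := \tilde f_\varepsilon^{k-1}$, so that $f_\varepsilon > 0$ on $\ol\Omega$ and $|\tilde f_\varepsilon|_{C^{1,1}(\ol\Omega)} \leq |\tilde f|_{C^{1,1}(\ol\Omega)} + \varepsilon$. The Dirichlet problem \eqref{1-1} with $f$ replaced by $f_\varepsilon$ admits a smooth convex solution $u^\varepsilon$ by the Caffarelli--Nirenberg--Spruck existence theory; the uniform convexity of $\Omega$ together with convex boundary data ensures that $u^\varepsilon$ is genuinely convex, not merely $k$-convex. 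The nondegenerate estimate applied to each $u^\varepsilon$ gives a uniform bound $|u^\varepsilon|_{C^{1,1}(\ol\Omega)} \leq C$. A subsequence then converges in $C^{1,\alpha}(\ol\Omega)$ to a convex $u^* \in C^{1,1}(\ol\Omega)$ which is a viscosity solution of the original Dirichlet problem; uniqueness of convex viscosity solutions to the degenerate problem forces $u^* = u$, yielding \eqref{main-estimate}.

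The principal subtlety is verifying that the global $C^2$ bound is honestly uniform in $\varepsilon$, depending only on $|\tilde f|_{C^{1,1}}$ rather than on pointwise bounds of $D^2 f$ or $D\log f$; this is precisely why the hypothesis is phrased in terms of $f^{1/(k-1)}$, and tracking this dependence carefully through the differentiated Hessian equation is the main technical point. A secondary item is the uniqueness step used to identify the limit $u^*$ with $u$, which is standard for convex viscosity solutions of degenerate Hessian equations but relies on uniform convexity of $\Omega$ for the construction of comparison barriers.
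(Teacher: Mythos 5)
Your overall plan — reduce the theorem to the boundary estimate of Theorem \ref{conj2} via the $C^1$ estimate, a global-to-boundary $C^2$ reduction, and an $\varepsilon$-approximation of $\tilde f$ — is the same skeleton the paper invokes in its one-sentence proof. However, there are two concrete problems with the way you flesh it out.

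The serious gap is your claim that the Caffarelli--Nirenberg--Spruck solution $u^\varepsilon$ of $\sigma_k(\lambda(D^2 u^\varepsilon)) = f_\varepsilon$ is ``genuinely convex, not merely $k$-convex'' because $\Omega$ is uniformly convex and the boundary data is convex. For $2\le k\le n-1$ the CNS theory only yields admissibility $\lambda(D^2 u^\varepsilon)\in\Gamma_k$, and there is no general principle upgrading $k$-convexity to convexity in this range; establishing convexity of $k$-Hessian solutions is a separate, delicate matter (constant-rank-theorem type) and is not a byproduct of convex $\Omega$ and convex data. Since Theorem \ref{conj2} and the barrier constructions of Sections 6--7 are proved \emph{for convex solutions} (convexity is used repeatedly, e.g.\ in \eqref{jw-4}, \eqref{jw-8}, \eqref{jw-26}, \eqref{jw-30}, and in passing to the upper barrier via $\det(D^2\ol u)=0$), your approximating solutions cannot be fed into that theorem without this step. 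The paper avoids this by applying the a priori estimates of Sections 2--7 directly to the \emph{given} convex solution $u$, for which $\tilde f\in C^{1,1}$ already implies the structural conditions \eqref{C1-8} and \eqref{GC2-10}; the ``approximation'' there is not an existence-theory detour.

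A secondary issue is your global $C^2$ reduction. You propose a Pogorelov-type auxiliary function $\log\lambda_1(D^2u)+\phi(|Du|^2)+\psi(u)$, but the paper's Proposition \ref{GC2-2} works with the much simpler $H=\Delta u+\tfrac{B}{2}|x|^2$ and obtains the $\inf_\Omega f$-independent bound precisely through the Guan--Li--Li concavity inequality \eqref{GLL-1} for $(\sigma_k/\sigma_1)^{1/(k-1)}$; the cancellations between $\sum_i S_k^{pq,rs}u_{pqi}u_{rsi}$ and the right-hand side $\Delta f=\Delta(\tilde f^{\,k-1})$ in \eqref{GC2-13} are exactly what absorbs the terms $|\nabla f|^2/f$ without needing a positive lower bound on $f$. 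Your claim that the $\log\lambda_1$ function ``ensures'' $\inf_\Omega f$-independence is asserted rather than justified, and the standard $\log\lambda_1$ computation introduces third-order terms whose control in the degenerate regime is nontrivial; you would need an analogue of \eqref{GLL-1} to close it. You should either reproduce the paper's $\Delta u$ argument or supply the missing concavity estimate for your test function.

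\end{document}
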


The rest of the paper is organized as follows. In Section 2, we establish the \emph{a priori} $C^1$ estimates. Section 3
is devoted to the maximal principle for second order derivatives. We concern the boundary estimates of the pure tangential derivatives and mixed
second order derivatives  in Section 4. In Section 5, we prove a lower bound of the pure tangential derivatives on the boundary.
In the last two sections, we obtain an upper bound of the tangential-normal derivatives in terms of the pure tangential derivatives for convex solutions.

\section{$C^1$ estimates}

In Section 2 to Section 5, we assume  that $u$ is the $k$-convex solution to \eqref{1-1}.
We establish the $C^1$ estimates for $u$ in this section.
By using the $(k - 1)$-convexity of the domain $\Omega$, we can construct
a subsolution $\ul u$ to \eqref{1-1}  as \cite{CNS},
\begin{equation}
\label{2-1}
\left\{ \begin{aligned}
   \sigma_k \big(\lambda(D^{2} \ul u)\big) & \geq f  \;\;\mbox{ in }~ \Omega, \\
                \ul u &= \varphi  \;\;\mbox{ on }~ \partial \Omega.
\end{aligned} \right.
\end{equation}
Let $h$ be the harmonic function in $\Omega$ with $h = \varphi$ on $\partial \Omega$.
By the comparison principal, we have
\[
\ul u \leq u \leq h \mbox{ in } \Omega \mbox{ and } \ul u = u = h \mbox{ on } \partial \Omega
\]
Thus, we  have
\begin{equation}
\label{C1-1}
\sup_{\ol \Omega} |u| + \sup_{\partial \Omega} |D u| \leq C,
\end{equation}
where the constant $C > 0$ depends only on $|\ul u|_{C^1 (\ol \Omega)}$ and $|h|_{C^1 (\ol \Omega)}$. Define
\[
S_k [r] = \sigma_k (\lambda (r))
\]
for a symmetric matrix $r = \{r_{ij}\}$ with $\lambda (r) \in \Gamma_k$ and
\[
S^{ij}_k = \frac{\partial S_k [D^2u]}{\partial r_{ij}}.
\]
By the Newton-Maclaurin inequality, we have
\begin{equation}
\label{GC2-9}
\sum_i S^{ii}_k = (n - k + 1) S_{k - 1} \geq c_0 S_k^{1 - 1/(k-1)} S_1^{1/(k-1)}
\end{equation}
for some positive constant $c_0$ depending only on $n$ and $k$.
\begin{proposition}
\label{C1-3}
Assume that
\begin{equation}
\label{C1-8}
|D f(x)|\leq A f^{1-\frac{1}{k-1}}(x)
\end{equation}
holds for  some positive constant  $A$ and any $x\in \Omega$.
Then there exists positive constant $C$ depending only on $n$, $k$, $A$
and $\Omega$ such that
\begin{equation}
\label{C1-4}
\sup_{\ol \Omega} |D u| \leq C (1 + \sup_{\partial \Omega} |D u|).
\end{equation}
\end{proposition}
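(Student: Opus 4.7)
The plan is to combine the maximum principle with an auxiliary function of the form
$$W(x) := \tfrac{1}{2}|Du(x)|^2 \, e^{\alpha u(x)}$$
for a constant $\alpha$ to be chosen (an additive variant such as $W = \log|Du|^2 + \alpha u$ works equally well). Because $|u|$ is bounded by \eqref{C1-1}, $W$ is comparable to $|Du|^2$ up to positive multiplicative constants, so if $W$ attains its maximum on $\partial\Omega$ then \eqref{C1-4} follows immediately. We therefore focus on the case of an interior maximum.

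Differentiating \eqref{1-1} once in $x_l$ gives $S_k^{ij}u_{ijl} = f_l$, and contraction with $u_l$ combined with the hypothesis \eqref{C1-8} yields the decay bound
$$|u_l f_l| \leq |Du|\,|Df| \leq A\,|Du|\,f^{1-1/(k-1)}.$$
At an interior maximum $x_0 \in \Omega$, writing $v = |Du|^2$, the critical-point condition $DW(x_0) = 0$ becomes $v_i = -\alpha v u_i$; substituting into the expansion of $S_k^{ij}W_{ij}$ and using $S_k^{ij}u_{ijl} = f_l$ to eliminate third derivatives produces a structural inequality of the form
$$2\,S_k^{ij}u_{li}u_{lj} + 2\,u_l f_l + \alpha\,k f v - \alpha^2 v\, S_k^{ij}u_i u_j \leq 0 \quad \text{at } x_0.$$

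To extract $|Du(x_0)| \leq C$ from this, I use in a frame diagonalizing $D^2u$ the Cauchy--Schwarz bound
$$S_k^{ij}u_{li}u_{lj} = \sum_i S_k^{ii}\lambda_i^{\,2} \geq \frac{(kf)^2}{\sum_i S_k^{ii}},$$
coming from $\sum_i S_k^{ii}\lambda_i = kf$, together with the Newton--Maclaurin lower bound \eqref{GC2-9} for $\sum_i S_k^{ii}$ and the decay bound on $|u_l f_l|$ above. The main obstacle is the degeneracy of $S_k^{ij}$ near $\{f = 0\}$: both $|Df|$ and $\sum_i S_k^{ii}$ degenerate at the rate $f^{1-1/(k-1)}$, precisely matching so that the cross term $u_l f_l$ can be absorbed into the positive principal term once $\alpha$ is chosen appropriately. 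This matching is the structural reason \eqref{C1-8} is the sharp hypothesis for a $C^1$ estimate uniform in $\inf_\Omega f$.
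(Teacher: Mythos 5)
Your plan does not close. The core difficulty is that your structural inequality
\begin{equation*}
0 \geq 2\,S_k^{ij}u_{li}u_{lj} + 2\,u_l f_l + \alpha\,k f\,|Du|^2 - \alpha^2\,|Du|^2\, S_k^{ij}u_i u_j
\end{equation*}
contains the genuinely negative term $-\alpha^2 |Du|^2\, S_k^{ij}u_i u_j$, which in a frame diagonalizing $D^2u$ is $-\alpha^2|Du|^2\sum_i S_k^{ii}u_i^2$ and can be as large as $\alpha^2|Du|^4 \sum_i S_k^{ii}$; nothing in your list of positive terms controls a quantity growing like $\sum_i S_k^{ii}$. Your proposed lower bound $S_k^{ij}u_{li}u_{lj} \geq (kf)^2/\sum_i S_k^{ii}$ in fact becomes \emph{smaller} when $\sum_i S_k^{ii}$ is large, so it points the wrong way; and for $k$-convex (not fully convex) $u$ you cannot rule out that $u_i\neq 0$ only at indices where $\lambda_i<0$, which is exactly what the critical-point relation $2u_i\lambda_i = -\alpha|Du|^2 u_i$ forces when $\alpha>0$. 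The second difficulty is that the only positive degeneracy-matched term you could hope to absorb $|u_lf_l|\lesssim A|Du|f^{1-1/(k-1)}$ into would be one proportional to $f^{1-1/(k-1)}$; but your inequality has no such term --- $\alpha k f|Du|^2$ carries a full power of $f$ and loses to $f^{1-1/(k-1)}$ as $f\to 0$. So the absorption you describe is not actually available.

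The reason the paper's proof works and yours stalls is that the paper uses a \emph{different test function}: $u_\xi + B|x|^2$, maximized jointly over $x\in\ol\Omega$ and directions $\xi$. Two things come for free that your $|Du|^2 e^{\alpha u}$ construction lacks. First, the quadratic $B|x|^2$ contributes $2B\sum_i S_k^{ii}$ under $S_k^{ij}\partial_i\partial_j$, and by Newton--Maclaurin \eqref{GC2-9} this is bounded below by $2c_0 B\,(\Delta u)^{1/(k-1)} f^{1-1/(k-1)}$ --- exactly the right power of $f$ to beat $|f_\xi| \leq A f^{1-1/(k-1)}$. Second, the first-order condition at an interior maximum reads $u_{\xi l} = -2Bx_l$, and the $\Gamma_2$ inclusion $\Gamma_k\subset\Gamma_2$ gives $(\Delta u)^2 > \sum_{i,j}u_{ij}^2 \geq \sum_l u_{\xi l}^2 = 4B^2|x|^2 \geq 4B^2\delta_0$, so $\Delta u$ is bounded below by a multiple of $B$. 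Feeding this into the Newton--Maclaurin bound makes the coefficient of $f^{1-1/(k-1)}$ grow like a power of $B$ strictly greater than $1$, which overwhelms $A$ for $B$ large. Your Bernstein-type ansatz produces neither the $\sum_i S_k^{ii}$ term nor the lower bound on $\Delta u$; if you want to salvage it you would at minimum need to add a term $B|x|^2$ to the exponent or to the test function and then confront the $-\alpha^2|Du|^2 S_k^{ij}u_iu_j$ term head on, at which point the linear-in-$u_\xi$ test function becomes strictly simpler.
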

\begin{proof}
We may assume $0 \notin \ol \Omega$ and $|x|^2 \geq \delta_0 > 0$ for all $x \in \ol \Omega$.
Suppose
\[
W := \max_{x \in \ol \Omega, \xi \in \mathbb{S}^n} \{u_\xi + B |x|^2\}
\]
is attained at an interior point $x_0 \in \Omega$ and $\xi_0 \in \mathbb{S}^n$, where $B$
is a positive constant sufficiently large to be determined. We may assume $\xi_0$ is in the
direction $x_1$ by rotating the coordinates.
We have, at $x_0$,
\begin{equation}
\label{C1-9}
\sum_{l=1}^n u^2_{1l} = 4 B^2 \sum_l x^2_l = 4 B^2 |x|^2 \geq 4 B^2 \delta_0.
\end{equation}
By \eqref{C1-8} and \eqref{GC2-9}, at $x_0$, we have
\begin{equation}
\label{C1-5}
\begin{aligned}
0 \geq & S^{ij}_k (u_1 + B |x|^2)_{ij} = f_1 + 2 B \sum_i S^{ii}_k\\
  \geq & - A f^{1 - 1/(k-1)} + 2 c_0 B S_1^{1/(k-1)} S_k^{1 - 1/(k-1)}\\
     = & - A f^{1 - 1/(k-1)} + 2 c_0 B (\Delta u)^{1/(k-1)} f^{1 - 1/(k-1)}.
\end{aligned}
\end{equation}
Since $\lambda (D^2 u) \in \Gamma_k \subset \Gamma_2$, we have
\[
\begin{aligned}
0 < 2 \sigma_2 (\lambda (D^2 u)) = \,& 2 \sum_{1 \leq i < j \leq n} (u_{ii} u_{jj} - u_{ij}^2)\\
   = \,& (\Delta u)^2 - \sum_i u_{ii}^2 - \sum_{i \neq j} u_{ij}^2.
\end{aligned}
\]
We have, by \eqref{C1-9},
\[
(\Delta u)^2 \geq \sum_l u^2_{1l} \geq 4 B^2 \delta_0.
\]
Thus, in view of  \eqref{C1-5}, we derive
\[
0 \geq - A f^{1 - 1/(k-1)} + 2 c_0 B (4 B^2 \delta_0)^{1/(k-1)} f^{1 - 1/(k-1)} > 0
\]
provided $B$ is sufficiently large which is a contradiction.
Then we conclude that $W$ is attained on the boundary when choosing
$B$ sufficiently large and
\eqref{C1-4} holds.
\end{proof}
The $C^1$ estimate has been established in terms of \eqref{C1-1} and \eqref{C1-4}.

\section{The maximal principle for second order derivatives}

In this section, we prove
\begin{proposition}
\label{GC2-2}
Suppose \eqref{C1-8} and
\begin{equation}
\label{GC2-10}
\inf\{\Delta f^{\frac{1}{k-1}}(x)\}\geq -\frac{A}{k-1}
\end{equation}
hold for  some positive constant  $A$ and any $x\in \Omega$.
Then there exists a positive constant $C$ depending on $n$, $k$, $A$ and $\Omega$ such that
\begin{equation}
\label{GC2-3}
\sup_{\ol \Omega} |D^2 u| \leq C (1 + \sup_{\partial \Omega} |D^2 u|).
\end{equation}
\end{proposition}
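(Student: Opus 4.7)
The plan is to establish \eqref{GC2-3} via a maximum principle argument applied to the auxiliary function
\[
W(x) := \Delta u(x) + \frac{\beta}{2}|x|^{2},
\]
for a large constant $\beta$ to be chosen. If the maximum of $W$ over $\overline{\Omega}$ is attained on $\partial\Omega$, then \eqref{GC2-3} follows immediately from the boundary values; otherwise, at an interior maximum point $x_{0}\in\Omega$ one has $DW(x_{0})=0$ and $D^{2}W(x_{0})\le 0$, hence $S^{ij}_{k}W_{ij}(x_{0})\le 0$. The goal is to force a contradiction by showing $S^{ij}_{k}W_{ij}(x_{0})>0$ once $\Delta u(x_{0})$ exceeds a constant depending only on the prescribed data.

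Computing at $x_{0}$ and using the equation $\sigma_{k}(D^{2}u)=f$ differentiated twice and summed over directions, one has
\[
S^{ij}_{k}(\Delta u)_{ij}=\Delta f-\sum_{p}S^{ij,rs}_{k}u_{ijp}u_{rsp}.
\]
The concavity of $\sigma_{k}^{1/k}$ on $\Gamma_{k}$ gives $\sum_{p}S^{ij,rs}_{k}u_{ijp}u_{rsp}\le\frac{k-1}{kf}|Df|^{2}$, and a direct calculation using $f=\tilde{f}^{k-1}$ yields the key identity
\[
\Delta f-\frac{k-1}{kf}|Df|^{2}=(k-1)\tilde{f}^{k-2}\Delta\tilde{f}-\frac{k-1}{k}\tilde{f}^{k-3}|D\tilde{f}|^{2}.
\]
By \eqref{C1-8} and \eqref{GC2-10}, which amount to $|D\tilde{f}|\le A/(k-1)$ and $\Delta\tilde{f}\ge -A/(k-1)$, and together with the $L^{\infty}$ bound on $\tilde{f}$, this quantity is bounded below by some $-C_{1}$.

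Invoking the Newton--Maclaurin inequality \eqref{GC2-9} to obtain $\sum_{i}S^{ii}_{k}\ge c_{0}\tilde{f}^{k-2}(\Delta u)^{1/(k-1)}$ and combining with the previous step yields
\[
0\ge S^{ij}_{k}W_{ij}(x_{0})\ge -C_{1}+\beta c_{0}\tilde{f}^{k-2}(x_{0})\bigl(\Delta u(x_{0})\bigr)^{1/(k-1)},
\]
which forces $\Delta u(x_{0})$ to be bounded once $\beta$ is chosen large enough, provided $\tilde{f}(x_{0})$ is bounded away from $0$. Since $u$ is $k$-convex, $\sup |D^{2}u|$ is comparable to $\sup \Delta u$ by a dimensional constant, so \eqref{GC2-3} follows in this non-degenerate regime.

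The main obstacle is the degenerate case $\tilde{f}(x_{0})\to 0$, where both of the above ingredients collapse: the Newton--Maclaurin factor $\tilde{f}^{k-2}$ vanishes for $k\ge 3$, and the algebraic identity acquires a singular term $-\frac{1}{2}\tilde{f}^{-1}|D\tilde{f}|^{2}$ when $k=2$. This will be handled by the standard approximation device of passing to the non-degenerate equations $\sigma_{k}(D^{2}u^{\varepsilon})=f+\varepsilon$ with $\varepsilon>0$; since $\tilde{f}^{\varepsilon}\ge\varepsilon^{1/(k-1)}>0$, the argument above applies uniformly, and because all the constants produced depend only on $n$, $k$, $A$ and $\Omega$ and are independent of $\inf_{\Omega}f$, the estimate \eqref{GC2-3} passes to the limit $\varepsilon\to 0^{+}$.
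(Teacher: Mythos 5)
Your computations are individually correct (the concavity of $\sigma_k^{1/k}$ does give $\sum_p S_k^{ij,rs}u_{ijp}u_{rsp}\le\frac{k-1}{kf}|Df|^2$, and the algebraic identity for $\Delta f-\frac{k-1}{kf}|Df|^2$ checks out), but the overall strategy has a genuine gap, and the $\varepsilon$-regularization you invoke at the end does not close it. The central problem is that by using the concavity of $\sigma_k^{1/k}$ you throw away the information encoded in the critical-point condition $\nabla\Delta u(x_0)=-\beta x_0$. Your concavity bound only produces the error term $-\frac{k-1}{kf}|Df|^2$, which has no factor involving $\nabla\Delta u$, so the fact that $\nabla\Delta u$ is of controlled size (equal to $-\beta x_0$) at the maximum is never used. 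As a result you are stuck with the error $(k-1)\tilde f^{k-2}\Delta\tilde f-\frac{k-1}{k}\tilde f^{k-3}|D\tilde f|^2$ which, as you yourself notice, is singular at $\tilde f=0$ when $k=2$, while for $k\ge3$ the positive Newton--Maclaurin contribution $\beta c_0\tilde f^{k-2}(\Delta u)^{1/(k-1)}$ collapses to zero as $\tilde f\to0$. There is no value of $\beta$ that rescues this.

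The paper's proof avoids exactly this obstruction by using Lemma \ref{GLL} (concavity of $(\sigma_k/\sigma_1)^{1/(k-1)}$), which yields
\[
-\sum_i S_k^{pq,rs}u_{pqi}u_{rsi}\ge(\alpha-1)\frac{|\nabla f|^2}{f}-2\alpha\frac{\nabla f\cdot\nabla\Delta u}{\Delta u}+(\alpha+1)f\frac{|\nabla\Delta u|^2}{(\Delta u)^2},
\]
with $\alpha=\tfrac1{k-1}$. Here the dangerous $(\alpha-1)\frac{|\nabla f|^2}{f}$ term cancels exactly against the $(1-\alpha)\frac{|\nabla f|^2}{f}$ coming from the expansion of $\Delta f$, and the remaining cross terms contain $\nabla\Delta u$, which is then replaced by $-\beta x_0$ at the critical point. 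This produces $\frac{2\alpha\beta x\cdot\nabla f}{\Delta u}+(\alpha+1)f\frac{\beta^2|x|^2}{(\Delta u)^2}$, both of which become harmless once $\Delta u$ is large, and the final inequality has the factor $f^{1-\alpha}$ multiplying the whole expression, so no lower bound on $f$ is needed. This is the ingredient your approach is missing.

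Finally, the regularization $f\mapsto f+\varepsilon$ does not repair the argument. For $k=2$ your $C_1$ becomes $\ge -A-\frac{A^2}{2\varepsilon}$, so the interior bound on $\Delta u^\varepsilon$ produced by your inequality is $\mathrm{O}(\varepsilon^{-1})$; it does not remain uniform as $\varepsilon\to0^+$, contrary to what the closing paragraph asserts. For $k\ge3$ the problem persists because the factor $(\tilde f^\varepsilon)^{k-2}$ multiplying the Newton--Maclaurin term is as small as $\varepsilon^{(k-2)/(k-1)}$ wherever $f$ is small, and the resulting bound again degenerates. The constants in \eqref{GC2-3} must be independent of $\inf_\Omega f$, and your scheme does not deliver that; the sharper concavity lemma together with the critical-point cancellation is what makes the paper's proof uniform.
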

To prove Theorem \ref{GC2-2}, we need the following lemma which is Lemma 3.2 in \cite{GLL12}.
\begin{lemma}
\label{GLL}
Let $\alpha = \frac{1}{k-1}$. If $\lambda (D^2 u) \in \Gamma_k$, then
\begin{equation}
\label{GLL-1}
\sum_i S_k^{pq, rs} u_{pqi} u_{rsi} \leq - S_k\sum_h\left[\frac{(S_k)_h}{S_k}-\frac{(S_1)_h}{S_1}\right]
\left[(\alpha-1)\frac{(S_k)_h}{S_k}-(\alpha+1)\frac{(S_1)_h}{S_1}\right],
\end{equation}
where
\[
S^{pq, rs}_k = \frac{\partial^2 S_k [D^2u]}{\partial r_{pq} \partial r_{rs}}.
\]
\end{lemma}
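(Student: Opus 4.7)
The strategy is to deduce the inequality from the classical concavity of the quotient function $Q := (\sigma_k/\sigma_1)^{1/(k-1)}$ on the G{\aa}rding cone $\Gamma_k$, applied slice-by-slice to the third-order tensor $D^3 u$. That is, at any symmetric matrix $r$ with $\lambda(r) \in \Gamma_k$ one has $Q^{pq,rs}\eta_{pq}\eta_{rs} \leq 0$ for every symmetric $\eta$; this is a classical Lin--Trudinger / Marcus-type fact and is the only nontrivial analytic input.

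To convert the concavity into the algebraic form stated in the lemma, I would write $Q = e^{\phi}$ with $\phi = \alpha(\log S_k - \log S_1)$ and $\alpha = 1/(k-1)$, using that $S_1$ is linear so $S_1^{pq,rs} = 0$. Logarithmic differentiation combined with the identity $Q^{pq,rs} = Q(\phi^{pq,rs} + \phi^{pq}\phi^{rs})$ gives, after contracting with an arbitrary symmetric $\eta$ and dividing by $\alpha Q > 0$, the pointwise bound
\[
\frac{S_k^{pq,rs}\eta_{pq}\eta_{rs}}{S_k} \leq (1-\alpha)A^2 + 2\alpha AB - (1+\alpha)B^2,
\]
where $A := S_k^{pq}\eta_{pq}/S_k$ and $B := \mathrm{tr}\,\eta/S_1$. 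The key cancellation is that $\phi^{pq}\phi^{rs}\eta_{pq}\eta_{rs} = \alpha^2(A-B)^2$ pairs with the $-\alpha A^2 + \alpha B^2$ part of $\phi^{pq,rs}\eta_{pq}\eta_{rs}$ to produce exactly these coefficients.

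I would then apply the pointwise bound with $\eta_{pq} := u_{pqh}$ for each fixed $h$, in which case $A = (S_k)_h/S_k$ and $B = (S_1)_h/S_1$ by the chain rule. Summing over $h$, multiplying through by $S_k$, and invoking the elementary factorization
\[
(1-\alpha)a^2 + 2\alpha ab - (1+\alpha)b^2 = -(a-b)\bigl[(\alpha-1)a - (\alpha+1)b\bigr]
\]
reproduces the claimed right-hand side of \eqref{GLL-1} verbatim. The main obstacle is the concavity of $Q$ itself: if one prefers not to cite it, one must instead diagonalize $D^2u$, expand $S_k^{pq,rs}\eta_{pq}\eta_{rs}$ into a diagonal part $\sum_{p\neq q}\sigma_{k-2;pq}u_{ppi}u_{qqi}$ plus a divided-difference off-diagonal part $-\sum_{p\neq q}\sigma_{k-2;pq}u_{pqi}^2$, and close via a Newton--Maclaurin argument on $\Gamma_k$; modulo this one input, everything else is differentiation of a composite logarithm and an algebraic factorization.
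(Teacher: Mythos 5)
Your proposal is correct, and it follows essentially the same route as the paper, which does not reprove the lemma but cites Lemma 3.2 of \cite{GLL12} and remarks that it comes from the concavity of $\bigl(\sigma_k/\sigma_1\bigr)^{1/(k-1)}$ in $\Gamma_k$ --- exactly the input you use, with your logarithmic-differentiation computation and the factorization $(1-\alpha)a^2+2\alpha ab-(1+\alpha)b^2=-(a-b)\bigl[(\alpha-1)a-(\alpha+1)b\bigr]$ filling in the derivation the paper leaves implicit. Your slice-wise choice $\eta_{pq}=u_{pqh}$, giving $A=(S_k)_h/S_k$ and $B=(S_1)_h/S_1$ by the chain rule, reproduces \eqref{GLL-1} verbatim, so no gap remains beyond the cited concavity fact.
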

Lemma \ref{GLL} comes from the concavity of $\big(\frac{\sigma_k}{\sigma_1}\big)^{1/(k-1)}$
in $\Gamma_k$.
\begin{proof}[Proof of Propsotion \ref{GC2-2}]
We consider the test function
\[
H=\Delta u+\frac{B}{2}|x|^2,
\]
where $B$ is a positive undetermined  constant.
First, by differentiating the equation \eqref{1-1}, we have
\begin{equation}
\label{GC2-11}
S_k^{ij}(\Delta u)_{ij} + \sum_i S_k^{pq,rs}u_{pqi}u_{rsi} = \Delta f,
\end{equation}
which implies
\begin{equation}
\label{GC2-12}
S_k^{ij} H_{ij} = -\sum_i S_k^{pq,rs}u_{pqi}u_{rsi} + B (n - k + 1) S_{k-1} + \Delta f.
\end{equation}
Suppose $H$ attains its maximum at an interior point $x_0 \in \Omega$.
Write $\alpha := \frac{1}{k-1}$. We have, at $x_0$,
\begin{equation}
\label{GC2-14}
0 = H_i = (\Delta u)_i + B x_i
\end{equation}
for each $1 \leq i \leq n$ and
therefore, by \eqref{GLL-1}, \eqref{GC2-9}, \eqref{GC2-10} and \eqref{C1-8}, we get
\begin{equation}
\label{GC2-13}
\begin{aligned}
0 \geq & - \sum_i S_k^{pq,rs} u_{pqi}u_{rsi} + B (n - k + 1) S_{k-1} + \Delta f\\
  \geq & S_k\sum_h\left(\frac{(S_k)_h}{S_k}-\frac{(S_1)_h}{S_1}\right)
\left((\alpha-1)\frac{(S_k)_h}{S_k}-(\alpha+1)\frac{(S_1)_h}{S_1}\right)\\
       & +(c_0 B S_1^{\alpha}-A) f^{1-\alpha} + (1-\alpha)\frac{|\nabla f|^2}{f}\\
     = & (\alpha - 1)\frac{|\nabla f|^2}{f} - 2\alpha \frac{\nabla \Delta u}{\Delta u} \cdot \nabla f
         + (1 + \alpha) f \frac{|\nabla \Delta u|^2}{(\Delta u)^2}\\
       & + (c_0 B (\Delta u)^{\alpha} - A) f^{1-\alpha} + (1-\alpha)\frac{|\nabla f|^2}{f}\\
     = & \frac{2 \alpha B x \cdot \nabla f}{\Delta u} +(1 + \alpha) f \frac{B^2 |x|^2}{(\Delta u)^2}
         + (c_0 B (\Delta u)^{\alpha} - A) f^{1-\alpha}\\
  \geq & \left(c_0 B (\Delta u)^{\alpha} - A - \frac{C B}{\Delta u}\right)f^{1-\alpha},
\end{aligned}
\end{equation}
where $C$ is some constant only depending on $\Omega,n,k$ and $A$.
Thus, for sufficient large $B$ and $\Delta u$, we have the desired estimates.
\end{proof}
If there is no degenerate point on $\partial\Omega$, we have the usual boundary estimate as \cite{CNS}. Therefore, using Proposition \ref{C1-3}  and Proposition \ref{GC2-2},  we can prove
\begin{theorem}
\label{cor1}
Suppose $\Omega$ is $(k-1)$-convex with $\partial \Omega \in C^{3, 1}$, $\varphi \in C^{3,1} (\ol \Omega)$,
$f \geq 0$ in $\Omega$, $f > 0$ on $\partial \Omega$ and \eqref{C1-8}, \eqref{GC2-10} hold for some constant $A>0$.
Then the estimate \eqref{main-estimate} holds for any
 $k$-convex  $C^{1,1}$ solution $u$ of \eqref{1-1}.
\end{theorem}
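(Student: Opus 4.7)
The plan is to combine the zero, first, and second order estimates already developed in Sections 2 and 3 with the classical boundary $C^2$ estimates in the non-degenerate regime. The hypothesis $f > 0$ on $\partial\Omega$ is what makes this possible: it says that even though $f$ may vanish inside $\Omega$, the equation stays uniformly elliptic in a neighborhood of the boundary, so the usual Caffarelli-Nirenberg-Spruck machinery can be brought in there. Meanwhile, the global interior reductions in Propositions \ref{C1-3} and \ref{GC2-2} are exactly designed to transfer all the degeneracy away from the interior onto the boundary.

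First I would nail down the $C^0$ and $C^1$ bounds. The $(k-1)$-convexity of $\Omega$ allows one to construct a subsolution $\underline u$ of \eqref{1-1} with $\underline u = \varphi$ on $\partial\Omega$, following the construction recalled at the start of Section 2. Comparing with $\underline u$ from below and with the harmonic extension $h$ of $\varphi$ from above gives \eqref{C1-1}, hence $\sup_{\overline\Omega}|u| + \sup_{\partial\Omega}|Du| \leq C$. The hypothesis \eqref{C1-8} then activates Proposition \ref{C1-3}, yielding the global gradient estimate $\sup_{\overline\Omega}|Du| \leq C$.

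Next I would produce the boundary second-order estimate $\sup_{\partial\Omega}|D^2 u| \leq C$. Because $f \geq f_0 > 0$ in a neighborhood of $\partial\Omega$ by continuity, the equation is uniformly elliptic there, and no new idea is needed: the standard three-part argument of \cite{CNS} applies. Concretely, pure tangential second derivatives are controlled directly by differentiating the boundary condition $u = \varphi$ twice in tangential directions and using the already-established $C^1$ bound; mixed tangential-normal derivatives are controlled by building a barrier out of a linearized operator applied to a $(k-1)$-convex defining function of $\Omega$, combined with the subsolution $\underline u$; and the double normal derivative $u_{nn}$ is then bounded from above by inspecting the equation $S_k[D^2 u] = f$ restricted to the boundary, where positivity of $f$ at the boundary prevents degeneracy of the tangential minor. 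This is exactly the content of the sentence ``If there is no degenerate point on $\partial\Omega$, we have the usual boundary estimate as \cite{CNS}.''

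Finally, with the boundary $C^2$ bound in hand, I invoke Proposition \ref{GC2-2}, whose hypotheses \eqref{C1-8} and \eqref{GC2-10} hold by assumption, to upgrade to the global bound $\sup_{\overline\Omega}|D^2 u| \leq C(1 + \sup_{\partial\Omega}|D^2 u|) \leq C$. Combining with the $C^0$ and $C^1$ bounds gives \eqref{main-estimate}. The main subtlety, rather than an obstacle, is simply that the constant $C$ produced this way depends on $\inf_{\partial\Omega} f$ through the boundary step; this is why the stronger Theorem \ref{conj2} (whose bound is independent of $\inf_\Omega f$) requires the much more delicate analysis of Sections 4--7, whereas here it suffices to quote the classical non-degenerate theory.
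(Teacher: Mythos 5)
Your proposal is correct and takes essentially the same route as the paper, which compresses the argument into two sentences: since $f>0$ on $\partial\Omega$, the boundary is non-degenerate, so the classical \cite{CNS} barrier machinery (using a subsolution built from the $(k-1)$-convexity of $\Omega$) yields $\sup_{\partial\Omega}|D^2u|\leq C$, and then Propositions \ref{C1-3} and \ref{GC2-2} upgrade the boundary $C^1$ and $C^2$ bounds to global ones under \eqref{C1-8} and \eqref{GC2-10}. Your elaboration of the CNS boundary step and your observation that the resulting constant depends on $\inf_{\partial\Omega}f$ (in contrast to Theorem \ref{conj2}) are both accurate and faithful to the paper's intent.
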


\section{Estimates for mixed tangential-normal derivatives}

In this and the following sections, we derive the second order boundary estimates.
For any point $x_0 \in \partial \Omega$, we may assume that $x_0$ is the origin and that the
positive $x_n$-axis is in the interior normal direction to $\partial \Omega$ at the origin.
Suppose near the origin, the boundary $\partial \Omega$ is given by
\begin{equation}
\label{BC2-1}
x_n = \rho (x') = \frac{1}{2} \sum_{\alpha, \beta < n} B_{\alpha\beta} x_\alpha x_\beta + O (|x'|^3),
\end{equation}
where $x' = (x_1, \ldots, x_{n-1})$ and $B_{\alpha\beta}$ is the second fundamental form of $\partial \Omega$ at $x_0$. Differentiating the
boundary condition $u = \varphi$ on $\partial \Omega$ twice, we can find a constant $C$ depending on
$|\varphi|_{C^2 (\partial \Omega)}$ and $|u|_{C^1 (\ol \Omega)}$ such that
\begin{equation}
\label{BC2-2}
|u_{\alpha \beta} (0)| \leq C \mbox{  for } \alpha, \beta \leq n - 1.
\end{equation}
Next, we establish the estimate
\begin{equation}
\label{BC2-3}
|u_{\alpha n} (0)| \leq C \mbox{  for } \alpha \leq n - 1.
\end{equation}
For $x\in\partial\Omega$ near the origin, let
\[
T_\alpha = \partial_{\alpha}
  + \sum_{\beta < n} B_{\alpha \beta} (x_{\beta}\partial_{n}-x_{n}\partial_{\beta}), \;\;
  \mbox{ for } \alpha < n,
\]
$T_n = \partial_n$ and $\omega_\delta = \{x \in \Omega: \rho (x') < x_n < \rho (x') + \delta^2 , |x'| < \delta\}$.
We have
\[
S_k^{ij} (T_\alpha u)_{ij} = T_\alpha f.
\]
It follows that
\begin{equation}
\label{bd-2}
|S_k^{ij} (T_\alpha (u - \varphi))_{ij}| \leq C \Big(S_{k - 1} + f^{1 - 1/ (k - 1)}\Big)
\end{equation}
and
\begin{equation}
\label{bd-3}
|T_\alpha (u - \varphi)| \leq C |x'|^2 \mbox{ on } \partial \Omega \cap \overline{\omega}_\delta
    \mbox{ for } \alpha < n
\end{equation}
when $\delta$ is sufficiently small since $u = \varphi$ on $\partial \Omega$. Because $\Omega$ is
uniformly $(k - 1)$- convex, there exist positive constants $\theta$ and $K$ such that
\[
(\kappa_1 - 2 \theta, \ldots, \kappa_{n-1} - 2 \theta, 2 K) \in \Gamma_k.
\]
Define
\begin{equation}
\label{BC2-6}
\Psi = \rho (x') - x_n - \theta |x'|^2 + K x_n^2.
\end{equation}
Note that the boundary $\partial \omega_\delta$ consists three parts: $\partial \omega_\delta
= \partial_1 \omega_\delta \cup \partial_2 \omega_\delta \cup \partial_3 \omega_\delta$, where
$\partial_1 \omega_\delta$, $\partial_2 \omega_\delta$ are  defined by $\{x_n=\rho\} \cap\overline{\omega}_{\delta}$, $\{ x_n=\rho+\delta^2\}\cap\overline{\omega}_{\delta}$
respectively, and $\partial_3 \omega_\delta$ is defined by $\{|x'| = \delta\}\cap\overline{\omega}_{\delta}$.

We see that when $\delta$ is sufficiently small (depending on $\theta$ and $K$),
$\Psi \leq 0$ in $\omega_\delta$ and furthermore,
\begin{equation}
\label{BC2-12}
\begin{aligned}
\Psi \leq & - \frac{\theta}{2} |x'|^2, \mbox{ on } \partial_1 \omega_\delta\\
\Psi \leq & - \frac{\delta^2}{2}, \mbox{ on } \partial_2 \omega_\delta\\
\Psi \leq & - \frac{\theta \delta^2}{2}, \mbox{ on } \partial_3 \omega_\delta.
\end{aligned}
\end{equation}
We derive from \eqref{BC2-1} that $\lambda (D^2 \Psi) \in \Gamma_k$ and
\begin{equation}
\label{BC2-4}
S_k^{ij} \Psi_{ij} \geq \eta_0 S_{k - 1} \mbox{ on } \overline{\omega}_\delta
\end{equation}
for some uniform constant $\eta_0 > 0$ by further requiring $\delta$ small.

We claim that there exist uniform positive constants $A$ and $\delta$ such
that $A \Psi \pm T_\alpha (u - \varphi) \leq 0$ on $\overline{\omega}_\delta$. It is easy to
get \eqref{BC2-3} from the claim since $A \Psi (0) \pm T_\alpha (u - \varphi) (0) = 0$.

Now we prove the claim. We first note that $A \Psi \pm T_\alpha (u - \varphi) \leq 0$
on $\partial \omega_\delta$ by \eqref{BC2-12} when $A$ is sufficiently large.
So we may suppose
\[
W := \max_{\overline{\omega}_\delta} (A \Psi
\pm T_\alpha (u - \varphi))
\]
is attained at an interior point $x_0 \in \omega_\delta$
and $A$ is sufficiently large to be chosen. We may assume, at $x_0$, $A \Psi
\pm T_\alpha (u - \varphi) \geq 0$ for otherwise we are done. Now we consider two cases:
(i) $\Delta u (x_0) \leq 1$ and (ii) $\Delta u (x_0) > 1$.

\textbf{Case (i).} As in the gradient estimates, we see $|u_{ij} (x_0)| \leq C_0 \Delta u (x_0) \leq C_0$
for some uniform positive constant $C_0$ and each $1 \leq i, j \leq n$. We note that, at $x_0$,
\[
\begin{aligned}
0 = \,& (A \Psi \pm T_\alpha (u - \varphi))_n\\
  = \,& - A + 2 A K x_n \nonumber\\
      & \pm \left[(u - \varphi)_{\alpha n}+\sum_{\beta<n}B_{\alpha\beta}(x_{\beta}(u-\varphi)_{nn}-x_n(u-\varphi)_{\beta n})-\sum_{\beta<n}B_{\alpha\beta}(u-\varphi)_{\beta}\right]\\
  < \,& 0,
\end{aligned}
\]
if we choose $\delta$ is sufficiently small and $A$ is sufficiently large using the bound of $|D^2u|$ and $|Du|$ at $x_0$. Then we have a contradiction.

\textbf{Case (ii).} By \eqref{GC2-9} and \eqref{bd-2}, we see at $x_0$ where $W$ is attained,
\[
\begin{aligned}
0 \geq \,& S_k^{ij} (A \Psi \pm T_\alpha (u - \varphi))_{ij}
  \geq A \eta_0 S_{k - 1} - C \Big(S_{k - 1} + f^{1 - 1/ (k - 1)}\Big)\\
  \geq \,& \frac{A}{2} \eta_0 S_{k-1} + \frac{A}{2} \eta_0 c_0 f^{1 - 1/ (k - 1)}
     - C \Big(S_{k - 1} + f^{1 - 1/ (k - 1)}\Big) > 0
\end{aligned}
\]
provided $A$ is sufficiently large, which is a contradiction.
The claim follows and \eqref{BC2-3} is proved.

For the homogenous problem, combining \eqref{BC2-2},\eqref{BC2-3} with the estimates for double normal derivatives in \cite{Dong06}, the conditions \eqref{C1-8} and \eqref{GC2-10} are sufficient to obtain the existence of $C^{1,1}$ solutions, which is a slight different from Dong's theorem \cite{Dong06}.
\begin{theorem}
\label{main-result-2}
Suppose $\Omega$ is uniformly $(k - 1)$-convex with $\partial \Omega \in C^{3,1}$, $f \geq 0$ and
 there exists a positive constant $A$ such that \eqref{C1-8} and \eqref{GC2-10} hold.
Then the equation \eqref{1-1} with $\varphi \equiv 0$ has a unique $k$-convex solution $u \in C^{1,1} (\ol \Omega)$
satisfying the estimates \eqref{main-estimate}, where the constant $C$ depends on $\Omega$, $A$, $(\sup_\Omega f)^{-1}$
and modulus continuity of $f$ in $\Omega$.
\end{theorem}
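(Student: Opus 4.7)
The plan is to obtain Theorem \ref{main-result-2} by an approximation argument: solve non-degenerate Dirichlet problems for perturbed right-hand sides $f_\varepsilon$, extract uniform $C^{1,1}$ bounds that are independent of $\varepsilon$, and pass to the limit. Because the $C^{1,1}$ bounds will in the end be independent of $\inf_\Omega f$, this is the natural strategy.

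First, I would replace $f$ by a smooth approximation $f_\varepsilon \ge \varepsilon > 0$ that converges to $f$ uniformly on $\ol\Omega$ and still satisfies \eqref{C1-8} and \eqref{GC2-10} with a constant $A$ essentially the same as for $f$; this is possible because \eqref{C1-8}, \eqref{GC2-10} are stable under standard mollification when $f^{1/(k-1)}$ is Lipschitz. For each $\varepsilon$, the problem \eqref{1-1} with right-hand side $f_\varepsilon$ and boundary data $\varphi \equiv 0$ is a non-degenerate $k$-Hessian equation on a uniformly $(k-1)$-convex domain, so by the classical Caffarelli--Nirenberg--Spruck theory it admits a unique $k$-convex classical solution $u_\varepsilon \in C^{3,\alpha}(\ol\Omega)$.

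Next I would establish uniform estimates on $u_\varepsilon$ that do not depend on $\varepsilon$ (hence not on $\inf_\Omega f_\varepsilon$). The $C^0$ bound follows by sandwiching $u_\varepsilon$ between the subsolution $\ul u$ from \eqref{2-1} (constructed from $f_\varepsilon$, uniform in $\varepsilon$) and the harmonic function $h \equiv 0$. The boundary gradient bound follows from this same sandwiching as in \eqref{C1-1}, and the interior gradient bound is provided by Proposition \ref{C1-3} applied to $u_\varepsilon$, using \eqref{C1-8} uniformly in $\varepsilon$. By Proposition \ref{GC2-2}, the full $C^2$ bound reduces to a boundary $C^2$ bound; for $u_\varepsilon$ on $\partial\Omega$ the pure tangential bound \eqref{BC2-2} and the mixed tangential-normal bound \eqref{BC2-3} proved in Section 4 apply directly (they use only \eqref{C1-8} and the uniform $(k-1)$-convexity, and $\varphi \equiv 0$). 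Since $\varphi \equiv 0$, the double-normal bound is provided by Dong's estimate from \cite{Dong06}, which requires exactly \eqref{C1-8} and \eqref{GC2-10}; this is the step that justifies restricting to the homogeneous problem here. Combining these yields $|u_\varepsilon|_{C^{1,1}(\ol\Omega)} \le C$ with $C$ independent of $\varepsilon$.

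Finally I would pass to the limit. By the uniform $C^{1,1}$ bound and Arzel\`a--Ascoli, a subsequence $u_{\varepsilon_j}$ converges in $C^{1,\alpha}(\ol\Omega)$ to a limit $u \in C^{1,1}(\ol\Omega)$ with $u = 0$ on $\partial\Omega$; the $D^2 u_{\varepsilon_j}$ converge weak-$\ast$ in $L^\infty$. Since each $u_{\varepsilon_j}$ is $k$-convex and $\sigma_k(\lambda(D^2 u_{\varepsilon_j})) = f_{\varepsilon_j} \to f$ uniformly, the limit $u$ is $k$-convex in the viscosity/almost-everywhere sense and satisfies \eqref{1-1}; uniqueness follows from the comparison principle for the $k$-Hessian operator on $k$-convex functions. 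The estimate \eqref{main-estimate} is inherited from the uniform bound, and the dependence on $(\sup_\Omega f)^{-1}$ and on the modulus of continuity of $f$ enters only through Dong's double-normal estimate.

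I expect the main obstacle to be the double-normal boundary estimate for the approximating solutions, since the tangential and mixed pieces are straightforward from Section 4 but the degenerate double-normal direction is exactly where the special structure of the homogeneous problem (namely $\varphi \equiv 0$, so $u$ itself vanishes on the boundary) is used in \cite{Dong06}; verifying that Dong's argument goes through uniformly under the slightly weaker hypothesis \eqref{GC2-10} (rather than \eqref{strong}) and with the dependence only on the listed quantities is the delicate point, and this is why the theorem is stated as a \emph{slight} refinement of Dong's result.
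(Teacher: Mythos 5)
Your proposal matches the paper's own route essentially line for line: the paper proves Theorem \ref{main-result-2} by combining the $C^1$ estimate of Proposition \ref{C1-3}, the interior-to-boundary reduction of Proposition \ref{GC2-2}, the boundary estimates \eqref{BC2-2} and \eqref{BC2-3} from Section 4, and Dong's double-normal estimate, then mollifying $f$ and passing to the limit exactly as you describe. You also correctly identify the one place where the argument leans on an external reference (Dong's double-normal bound under \eqref{C1-8}--\eqref{GC2-10} rather than the stronger \eqref{strong}), which the paper itself treats as a direct citation without further verification.
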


\section{An inequality on the boundary}

Suppose  $W$  is a $(0,2)$ tensor on $\overline{\Omega}$, namely $W\in C^2(T^*\overline{\Omega}\otimes T^*\overline{\Omega})$,
where $T^*\overline{\Omega}$ is the co-tangent bundle of $\overline{\Omega}$.
Let $W'$
be the projection of $W|_{\partial\Omega}$ in the bundle $T^* \partial \Omega\otimes T^*\partial\Omega$, where $T^*\partial \Omega$ is the co-tangent bundle of $\partial \Omega$.  $\lambda' (W')$ denotes
the eigenvalue vector of $W'$ with respect to the induced metric on $\partial \Omega$.
In this section, we prove
\begin{lemma}
\label{cor-lemma}
There exists a uniform positive constant $\delta_0$ such that
\begin{equation}
\label{cor-1}
\sigma_{k-1} (\lambda' [(D^2 u)']) \geq \delta_0 f
\end{equation}
on $\partial \Omega$.
\end{lemma}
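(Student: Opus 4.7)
The plan is to argue by contradiction via a tangential maximum principle on $\partial\Omega$, in the spirit of Trudinger \cite{T} and of Lemma~3.1 in \cite{GTW99}. Define
\[
\Phi(x) := \sigma_{k-1}(\lambda'[(D^2u)'(x)]) - \delta_0 f(x), \qquad x\in\partial\Omega,
\]
for a small $\delta_0>0$ to be chosen, and suppose the conclusion fails. Then $\Phi$ attains a negative minimum at some $x_0\in\partial\Omega$. I would fix local coordinates adapted to $\partial\Omega$ at $x_0$: $e_n$ is the inner unit normal and the tangential axes $e_1,\ldots,e_{n-1}$ are rotated so that $U:=(u_{\alpha\beta})_{\alpha,\beta<n}$ is diagonal at $x_0$ with eigenvalues $\mu_1,\ldots,\mu_{n-1}$. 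By the standard restriction property of G\aa rding cones, $\mu\in\Gamma_{k-1}(\mathbb{R}^{n-1})$, so that the cofactors $\sigma_{k-2;\alpha}(\mu):=\partial\sigma_{k-1}(\mu)/\partial\mu_\alpha$ are nonnegative for every $\alpha$.

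Two algebraic inputs will drive the computation. First, cofactor expansion of $\sigma_k(D^2u)$ in the $n$-th row and column at $x_0$ yields
\[
f = u_{nn}\sigma_{k-1}(\mu) + \sigma_k(\mu) - \sum_{\alpha<n} u_{\alpha n}^2\,\sigma_{k-2;\alpha}(\mu).
\]
Second, differentiating the boundary condition $u=\varphi$ twice tangentially along $\partial\Omega$ gives the boundary Hessian formula
\[
u_{\alpha\beta}(x) = \varphi_{\alpha\beta}(x) + \big(\varphi_n(x)-u_n(x)\big)B_{\alpha\beta}(x), \qquad \alpha,\beta<n,\ x\in\partial\Omega,
\]
so that $U$ along $\partial\Omega$ is determined by $\varphi$, $u_n$, and the second fundamental form $B$.

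The main step is to trace the second-order minimum condition $D^2_{\partial\Omega}\Phi(x_0)\ge 0$ against the cofactor tensor $\{S_{k-1}^{\alpha\beta}(U)\}$, which is positive semidefinite at $x_0$ since $\mu\in\Gamma_{k-1}$. Using the boundary Hessian formula, the resulting scalar inequality re-expresses the intrinsic tangential second derivative of $\sigma_{k-1}(U)$ at $x_0$ as a combination of tangential derivatives of $\varphi$, $u_n$, $B$, and third derivatives of $u$. The third derivatives are eliminated by differentiating the full equation $\sigma_k(D^2u)=f$ twice tangentially, and the resulting quadratic error $\sum_i S_k^{pq,rs}u_{pqi}u_{rsi}$ is controlled by the concavity estimate of Lemma~\ref{GLL}; the uncontrolled tangential derivatives of $u_n$ are cancelled using the first-order minimum condition $\nabla_{\partial\Omega}\Phi(x_0)=0$. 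Invoking the regularity bound $\Delta(f^{1/(k-1)})\ge -A/(k-1)$ from $f^{1/(k-1)}\in C^{1,1}(\overline{\Omega})$, together with the uniform positivity $|B_{\alpha\beta}|\ge c_0$ from strict convexity of $\partial\Omega$, one arrives at a bound $\sigma_{k-1}(\mu)(x_0)\ge c\,\delta_0 f(x_0)$ with a definite constant $c>0$, contradicting $\Phi(x_0)<0$ for $\delta_0$ small enough. The main obstacle is precisely this bookkeeping --- identifying the right cofactor contraction, the right use of the first-order condition, and the right appeal to the twice-differentiated equation so that all of the a priori uncontrolled quantities (tangential derivatives of $u_n$ and third derivatives of $u$) get absorbed simultaneously --- and this is the content of Trudinger's idea from \cite{T} in the present $k$-Hessian setting.
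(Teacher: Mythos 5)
Your proposal and the paper take genuinely different routes, and yours as sketched has real gaps.

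What the paper actually does is a \emph{concavity plus interior-barrier} argument in the spirit of Trudinger, not a tangential maximum principle on $\partial\Omega$. It sets $G(r):=\sigma_{k-1}^{1/(k-1)}(\lambda'(r))$ and $m:=\inf_{\partial\Omega} G\big((D^2u)'\big)/\tilde f$ with $\tilde f=f^{1/(k-1)}$, and works at the infimum point $x_0$. Instead of invoking $D^2_{\partial\Omega}\Phi(x_0)\ge 0$, it uses only the definition of $m$ together with the \emph{concavity and $1$-homogeneity} of $G$, combined with the boundary identity $\nabla_{\alpha\beta}u=\nabla_{\alpha\beta}\underline u-\nabla_n(u-\underline u)\,\sigma_{\alpha\beta}$ coming from $u=\underline u$ on $\partial\Omega$ (note: the subsolution $\underline u$ enters essentially, not just $\varphi$). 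This yields a \emph{first-order} pointwise inequality on $\partial\Omega$ near $x_0$, from which the authors build a boundary barrier $\tilde\Phi$ with $\tilde\Phi\ge 0$ near $x_0$ and $\tilde\Phi(x_0)=0$, and then run an \emph{interior} maximum-principle comparison of $B\Psi-\Phi$ over a small neighbourhood $\omega_\delta\subset\Omega$ to control the \emph{normal} derivative $\Phi_n(x_0)$. This is what produces the upper bound $u_{nn}(x_0)\le C(1+m)\le C$. Only then does the cofactor expansion $f=u_{nn}\sigma_{k-1}(b)-\sum u_{\alpha n}^2\sigma_{k-2;\alpha}(b)+\sigma_k(b)$ come in, giving $f/\sigma_{k-1}(b)\le u_{nn}+\sigma_k(b)/\sigma_{k-1}(b)\le C$ at $x_0$, which is exactly $m\ge\delta_0$.

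The gaps in your sketch are concrete. First, your argument never produces a bound on $u_{nn}(x_0)$, and yet that is precisely what the cofactor expansion requires; without it, the identity you quote gives you $f\le u_{nn}\sigma_{k-1}(\mu)+\sigma_k(\mu)$ with an unbounded $u_{nn}$, hence no contradiction with $\Phi(x_0)<0$. Second, the claim that the third-order terms in $D^2_{\partial\Omega}\Phi(x_0)$ can be ``eliminated by differentiating the equation twice tangentially'' does not go through: differentiating $\sigma_k(D^2u)=f$ twice tangentially produces $S_k^{ij}u_{ij\alpha\beta}$ (fourth-order) and $S_k^{pq,rs}u_{pq\alpha}u_{rs\beta}$, whereas the terms you need to kill from $D^2_{\partial\Omega}\big((\varphi_n-u_n)B_{\alpha\beta}\big)$ are mixed third derivatives $u_{n\alpha\beta}$, and Lemma~\ref{GLL} as stated controls only the full trace $\sum_i S_k^{pq,rs}u_{pqi}u_{rsi}$, not a tangential contraction with the cofactor tensor of the restricted Hessian. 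Third, the tangential second-order scheme you describe is the GTW99 template; the authors point out explicitly in the introduction that this template relies on the affine structure of the Monge--Amp\`ere operator and does not transfer to $2\le k\le n-1$, which is the very reason they switch to the concavity/barrier method you would need here. Finally, working with $\Phi=\sigma_{k-1}-\delta_0 f$ rather than the ratio $G/\tilde f$ is awkward in the degenerate setting (where $f$ can vanish); the ratio formulation and the $(k-1)$-th root are what make the concavity argument close up.
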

\begin{proof}
We use an idea being closed to \cite{T}.
Define $G (r) = \sigma_{k-1}^{1/(k-1)} (\lambda' (r))$ for a $(n-1) \times (n-1)$ matrix
$r$ with its eigenvalues $\lambda' (r) \in \Gamma_{k-1}$. Let
\[
m := \inf_{\partial \Omega} \frac{G ((D^2 u)')}{\widetilde{f}},
\]
where $\widetilde{f} = f^{1/(k-1)}$. It suffices
to prove $m \geq \delta_0$ for some positive constant $\delta_0$.
Suppose $m$ is attained at $x_0 \in \partial \Omega$. We may assume that $x_0$ is the origin and the
positive $x_n$-axis is in the interior normal direction to $\partial \Omega$ at the origin as before.
Choose local orthonormal frames
$\{e_1, \ldots, e_n\}$ around $x_0$ such that $e_n$ is the interior normal to $\partial \Omega$.
$\nabla$ denotes the standard connection of $\mathbb{R}^n$. Write $e_i (x) = e^j_i (x)\partial_j$ for $i = 1, \ldots n$, where $\partial_1,\cdots,\partial_n$ is the  rectangular coordinate system. Thus, we have
 \[
\nabla_i u:=\nabla_{e_i}u= e_i^j \partial_ju=e_i^ju_j
\]
and
\[
\nabla_{ij} u:=\nabla_{e_i}\nabla_{e_j}u =e_i^ke_j^l\partial_{k}\partial_l u = e_i^k e_j^l u_{kl}.
\]
We may also assume that $e_i^j (x_0) = \delta_{ij}$
for $1 \leq i, j \leq n$ and $\{\nabla_{\alpha \beta} u (x_0)\}_{1 \leq \alpha, \beta \leq n-1}$ is diagonal.
Let $\ul u$ be a $k$-convex subsolution satisfying \eqref{2-1}.
Since $u - \ul u = 0$ on $\partial \Omega$, we find
\begin{equation}
\label{cor-2}
\nabla_{\alpha \beta} u = \nabla_{\alpha \beta} \ul u - \nabla_n (u - \ul u) \sigma_{\alpha \beta}, \ 1 \leq \alpha, \beta \leq n - 1
\end{equation}
on $\partial \Omega$ near $x_0$, where $\sigma_{\alpha \beta} = \langle D_{e_\alpha} e_\beta, e_n\rangle$ is the second fundamental form of $\partial \Omega$.
Let
\[
G^{\alpha \beta}_0 = \frac{\partial G}{\partial r_{\alpha \beta}} (\nabla_{\alpha \beta} u (x_0)), 1 \leq \alpha, \beta \leq n-1.
\]
By the concavity of $G$, we have
\begin{equation}
\label{cor-3}
G^{\alpha \beta}_0 (r_{\alpha \beta} - \nabla_{\alpha \beta} u (x_0))
  \geq G (r) - G (\nabla_{\alpha \beta} u (x_0))
\end{equation}
for any matrix $r$ satisfying $\lambda' (r) \in \Gamma_{k-1}$. It follows from \eqref{cor-2}, \eqref{cor-3}
and the definition of $m$ that
\begin{equation}
\label{cor-4}
\begin{aligned}
- G^{\alpha \beta}_0 \sigma_{\alpha \beta} \nabla_n (u - \ul u)
   = \,& G^{\alpha \beta}_0 (\nabla_{\alpha \beta} u - \nabla_{\alpha \beta} \ul u)\\
   = \,& G^{\alpha \beta}_0 (\nabla_{\alpha \beta} u - \nabla_{\alpha \beta} u (x_0))\\ & + G^{\alpha \beta}_0 \nabla_{\alpha \beta} u (x_0)
      - G^{\alpha \beta}_0 \nabla_{\alpha \beta} \ul u\\
\geq \,& G (\nabla_{\alpha \beta} u) - G (\nabla_{\alpha \beta} u (x_0))\\ & + G^{\alpha \beta}_0 \nabla_{\alpha \beta} u (x_0)
      - G^{\alpha \beta}_0 \nabla_{\alpha \beta} \ul u\\
\geq \,& m \widetilde{f} 
     - G^{\alpha \beta}_0 \nabla_{\alpha \beta} \ul u
\end{aligned}
\end{equation}
on $\partial \Omega$ near $x_0$. Since $\Omega$ is uniformly $(k-1)$-convex and $\sigma_{\alpha \beta}$ is the second
fundamental form of $\partial \Omega$, we have
\[
\lambda' (\{\sigma_{\alpha \beta} - \theta \delta_{\alpha \beta}\}) \in \Gamma_{k-1}
\]
on $\partial \Omega$ near $x_0$ for some positive constant $\theta$ depending only on the geometry of $\partial \Omega$.
Thus, by the concavity of $G$, we get
\begin{equation}
\label{useful}
\begin{aligned}
G^{\alpha \beta}_0 \sigma_{\alpha \beta} \geq \,& G (\sigma_{\alpha \beta} - \theta \delta_{\alpha \beta}) + \theta \sum_{\alpha =1}^{n-1} G^{\alpha\alpha}_0\\
   \geq \,& \gamma + \theta \sum_{\alpha =1}^{n-1} G^{\alpha\alpha}_0
\end{aligned}
\end{equation}
for some positive constant $\gamma$.
Let
\[
\Omega_\delta := \{x \in \Omega: |x - x_0| < \delta\}
\]
for $\delta$ sufficiently small and
$\eta := G^{\alpha \beta}_0 \sigma_{\alpha \beta}$. Define the following barrier in $\Omega_\delta$,
\[
\tilde{\Phi} = - \nabla_n(u - \ul u) + \frac{1}{\eta} \Big(
      G^{\alpha \beta}_0 \nabla_{\alpha \beta} \ul u - m \widetilde{f} \Big).
\]
It follows that $\tilde{\Phi} \geq 0$ on $\partial \Omega \cap \partial \Omega_\delta$ and $\tilde{\Phi} (x_0) = 0$.
Since $e_j^i = \delta_{ij}$ at $x_0 = 0$, we have
\begin{equation}
\label{boundary1}
|e_\alpha^\alpha (u - \ul u)_\alpha| \equiv |\sum_{j\neq \alpha} e_\alpha^j (u - \ul u)_j| \leq C\sum_{j\neq \alpha}|e_{\alpha}^j|\leq C_0 |x|
\end{equation}
on $\partial \Omega \cap \partial \Omega_\delta$ for $1 \leq \alpha \leq n-1$ and the constant $C_0$ depends only on
the bound of $|D (u - \ul u)|$ and $|D e_\alpha^j|$ for $j \neq \alpha$. We may also assume that $\inf_{\Omega_\delta} e_\alpha^\alpha e_n^n \geq c_0 > 0$
for any $1 \leq \alpha \leq n-1$ and some positive constant $c_0$ by choosing $\delta$ sufficiently small.
By \eqref{boundary1} and that $e_j^i = \delta_{ij}$ at $x_0 = 0$ again, we have
\[
\begin{aligned}
\nabla_n (u - \ul u) = \,& e_n^n (u - \ul u)_n + \sum_{\alpha = 1}^{n-1} e_n^{\alpha} (u - \ul u)_\alpha\\
  = \,& e_n^n (u - \ul u)_n + \sum_{\alpha = 1}^{n-1} \frac{e_n^{\alpha} e^\alpha_\alpha (u - \ul u)_\alpha}{e^\alpha_\alpha}
  \geq e_n^n (u - \ul u)_n - CC_0 |x|^2
\end{aligned}
\]
on $\partial \Omega \cap \partial \Omega_\delta$, where the constant $C$ depends only on $\inf_{\Omega_\delta} e_\alpha^{\alpha}$
and the bound of $|D e_n^\alpha|$, $1 \leq \alpha \leq n-1$.
Since $\tilde{\Phi} \geq 0$ on $\partial \Omega \cap \partial \Omega_\delta$ and $\tilde{\Phi} (x_0) = 0$, we have
\[
\Phi := - (u - \ul u)_n + \frac{1}{\eta e_n^n} \Big(
      G^{\alpha \beta}_0 \nabla_{\alpha \beta} \ul u - m \widetilde{f} \Big) + M |x|^2 \geq 0
\]
on $\partial \Omega \cap \partial \Omega_\delta$ and $\Phi (x_0) = 0$,
where $M = CC_0/c_0$.
Let $\Psi$ be the function defined in \eqref{BC2-6}. Note that $\Psi$ satisfies \eqref{BC2-12}
and \eqref{BC2-4}. Consider the function $B \Psi - \Phi$ on $\overline{\omega}_\delta$, where
$B$ is a positive sufficiently large constant to be chosen later.

Because of \eqref{bd-2} and $\widetilde{f} \in C^{1,1} (\ol \Omega)$, we have
\[
S^{ij}_k \Phi_{ij} \leq C \Big(S_{k - 1} + f^{1 - 1/ (k - 1)}\Big) \mbox{ on } \overline{\omega}_\delta.
\]
As in Section 4, we can take sufficiently large $B$ to staisfy $B \Psi - \Phi \leq 0$ on $\partial \omega_\delta$.
Suppose the maximum of $B \Psi - \Phi$ on $\overline{\omega}_\delta$ is achieved at an interior point $x_1 \in \omega_\delta$.
We see, at $x_1$,
\[
(B \Psi - \Phi)_n = - B + 2 BK x_n - \Phi_n = 0,
\]
which implies that $u_{nn} (x_1) \geq B/2$  provided $B$ is sufficiently large
and $\delta$ is sufficiently small. Therefore, we get $\Delta u (x_1) \geq B/2$.
Using \eqref{GC2-9} again, we have,
at $x_1$,
\[
0 \geq S^{ij}_k (B \Psi - \Phi)_{ij} > 0
\]
if $B$ is sufficiently large, which is a contradiction. Therefore, $B \Psi - \Phi \leq 0$ on $\overline{\omega}_\delta$.
It follows that
$\Phi_n (x_0) \geq B \Psi_n (x_0) \geq -C$.
Thus, we get
\[
u_{nn} (x_0) \leq C (1 + m),
\]
where $C$ depends on $|u|_{C^1 (\overline{\Omega})}$, $\sup_{\partial \Omega} |(D^2 u)'|$, $|\ul u|_{C^2 (\overline{\Omega})}$
and $|\widetilde{f}|_{C^{1,1} (\overline{\Omega})}$. Without loss of generality, we may assume $m \leq 1$. Combining with $\Delta u\geq 0$, we then obtain a bound
\begin{equation}
\label{B-3}
|u_{nn} (x_0)| \leq C.
\end{equation}
Recall that $\nabla_{\alpha \beta} u (x_0) = u_{\alpha \alpha} (x_0) \delta_{\alpha \beta}$ for $1 \leq \alpha, \beta \leq n-1$.
Let $b_\alpha = u_{\alpha \alpha} (x_0)$ for $1 \leq \alpha \leq n-1$ and
$b = (b_1, \ldots, b_{n-1})$.
By the equation \eqref{1-1}, we see, at $x_0$,
\begin{equation}
\label{fromeqn}
u_{nn} \sigma_{k - 1} (b) - \sum_{\alpha = 1}^{n-1} u_{\alpha n}^2 \sigma_{k - 2; \alpha} (b) + \sigma_k (b) = f.
\end{equation}
Since we have
\[
\begin{aligned}
\frac{\sigma_k (b)}{\sigma_{k-1} (b)} = \,& \frac{\sum_{\alpha = 1}^{n - 1} b_\alpha \sigma_{k - 1; \alpha} (b)}{k \sigma_{k-1} (b)}\\
        = \,& \frac{\sum_{\alpha = 1}^{n - 1} b_\alpha (\sigma_{k-1} (b) - b_\alpha \sigma_{k - 2; \alpha} (b))}{k \sigma_{k-1} (b)}\\
        = \,& \frac{\sum_{\alpha = 1}^{n - 1} b_\alpha}{k}
           - \frac{\sum_{\alpha = 1}^{n - 1} b_\alpha^2 \sigma_{k - 2; \alpha} (b)}{k \sigma_{k-1} (b)}
        \leq \frac{\sum_{\alpha = 1}^{n - 1} b_\alpha}{k} \leq C,
\end{aligned}
\]
then we get
\[
\frac{f (x_0)}{\sigma_{k - 1} (b)} \leq u_{nn} (x_0) + \frac{\sigma_k (b)}{\sigma_{k-1} (b)} \leq C.
\]
We are done.
\end{proof}

\section{Estimates for convex solutions, the case $k=2$}

From now on, we assume that $u \in C^3 (\Omega) \cap C^2 (\ol \Omega)$ is a convex
solution of \eqref{1-1} and that $\Omega$ is uniformly convex. We consider an arbitrary point $x_0 \in \partial \Omega$.
As in Section 4, we assume $x_0$ is the origin, the
positive $x_n$-axis is in the interior normal direction to $\partial \Omega$ at the origin,
and near the origin, the boundary $\partial \Omega$ is given by \eqref{BC2-1}.

In this and the following sections we always use the notation $x' = (x_1, \ldots, x_{n-1}) \in \mathbb{R}^{n-1}$ to denote the
projection of $x = (x_1, \ldots, x_n) \in \mathbb{R}^n$ in $\mathbb{R}^{n-1}$ by its first $n-1$ coordinates.

We may assume $\{u_{\alpha \beta} (0)\}_{1 \leq \alpha, \beta \leq n-1}$ is diagonal and
\[
\{u_{\alpha \beta} (0)\}_{1 \leq \alpha, \beta \leq n-1} = \mathrm{diag} \{b_1, \ldots, b_{n-1}\}.
\]
We may further assume $b_1 \leq \cdots \leq b_{n-1}$.

Let  $\nabla_{\zeta\eta}$ denote the second order covariant derivative
with respect to the standard connection $\nabla$ for any local vector fields $\zeta$, $\eta$ in $\Omega$ near the origin.
In this and the next section, we shall prove
\begin{lemma}
\label{lemma-2}
Let $\nu$ denote the unit interior normal to $\partial \Omega$ and $\tau_\beta = \partial_\beta + \rho_\beta \partial_n$ be the tangential vector field of $\partial \Omega$ for $\beta \leq n-1$ near the origin.
For $\alpha = n - (k-1), \ldots, n-1$, there exist two positive constants $\theta_\alpha$ and $C$ depending only on $\Omega$,
$|\varphi|_{C^{3,1} (\ol \Omega)}$ and $|f^{1/(k-1)}|_{C^{1,1} (\ol \Omega)}$ such that if any point $x = (x_1, \ldots, x_n) \in \partial \Omega$
satisfies $|x_\beta| \leq \theta_\alpha \sqrt{b_\alpha}$ for any $1 \leq \beta \leq \alpha$
and $|x_\beta| \leq \theta_\alpha \frac{b_\alpha}{\sqrt{b_\beta}}$ for any $\alpha + 1 \leq \beta \leq n-1$, we have the estimates
\begin{equation}
\label{jw-18}
|\nabla_{\nu \tau_\beta} u (x)| \leq C \sqrt{b_\alpha}
\end{equation}
for all $\beta = 1, \ldots, \alpha$.
\end{lemma}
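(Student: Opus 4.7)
The plan is a barrier argument for the linearized operator $S_k^{ij}\partial_{ij}$, combining a pointwise seed estimate at the origin with propagation via the maximum principle on a small subregion. I would proceed by downward induction on $\alpha$ from $n-1$ to $n-k+1$; for the case $k=2$ treated in Section 6, the induction collapses to the single base case $\alpha = n-1$, and the inductive hypothesis is only needed for the general case in Section 7.

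\noindent\textbf{Seed estimate at the origin.} Convexity of $u$ gives the pointwise Cauchy-Schwarz-type inequality $u_{n\beta}^2 \leq u_{nn}u_{\beta\beta}$. At the origin this specializes to $|u_{n\beta}(0)|^2 \leq u_{nn}(0)\,b_\beta$, and the bound $u_{nn}(0) \leq C$ already established in the proof of Lemma \ref{cor-lemma} (see \eqref{B-3}) combined with $b_\beta \leq b_\alpha$ yields $|u_{n\beta}(0)| \leq C\sqrt{b_\alpha}$ for each $\beta \leq \alpha$. This gives the target scale at the center of the box.

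\noindent\textbf{Propagation.} Fix $\beta \leq \alpha$. To extend the bound to the boundary points $x$ in the box specified by the hypothesis, I would set $\Xi^{\pm} = \pm T_\beta(u - \varphi)$, for which \eqref{bd-2} gives $|S_k^{ij}\Xi^{\pm}_{ij}| \leq C(S_{k-1} + f^{1-1/(k-1)})$ in a small region $\omega_\delta$, and \eqref{bd-3} gives $|\Xi^{\pm}| \leq C|x'|^2$ on $\partial_1\omega_\delta$. I would then test $\Xi^\pm$ against a barrier of the form
\[
\Theta^\pm = \Xi^\pm - A\sqrt{b_\alpha}\,\Psi - \frac{B\,|x'|^2}{\sqrt{b_\alpha}},
\]
with $\Psi$ as in \eqref{BC2-6}. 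Inequality \eqref{BC2-4} shows that $A\sqrt{b_\alpha}\,\Psi$ generates the dominant $S_{k-1}$ term when $A$ is large; to absorb the $f^{1-1/(k-1)}$ term I would split into two cases based on the size of $\Delta u$ and invoke \eqref{GC2-9} exactly as in Case (i)/Case (ii) of the proof of \eqref{BC2-3}. Combined with the seed estimate at $0$ and \eqref{BC2-12}, this yields $\Theta^\pm \leq 0$ on $\partial\omega_\delta$ provided $\delta$ is taken proportional to $\sqrt{b_\alpha}$; the maximum principle then gives $\Theta^\pm(x) \leq 0$, which rearranges to \eqref{jw-18}.

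\noindent\textbf{Main obstacle.} The hardest part is calibrating the scale $\delta \sim \sqrt{b_\alpha}$, the barrier constants $A, B$, and the boundary data simultaneously. The correction $B|x'|^2/\sqrt{b_\alpha}$ blows up as $b_\alpha \to 0$, so $\omega_\delta$ must be shrunk to the $\sqrt{b_\alpha}$-scale in order that this correction remains of size $\sqrt{b_\alpha}$ at the target point $x$; yet shrinking $\omega_\delta$ leaves less room for $\Psi$ to dominate the error terms in the interior, forcing a delicate balance. For the inductive step $\alpha < n-1$, one must in addition exploit the already-established bound at level $\alpha + 1$ (valid on a strictly larger box) to control the portion of $\Xi^\pm$ corresponding to the large-eigenvalue directions on $\partial_3\omega_\delta$. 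This replacement for the affine-transformation rescaling available only in the Monge-Amp\`ere case is the key technical novelty, exactly as noted in the introduction.
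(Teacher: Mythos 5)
Your proposal has a critical circularity in the seed step, and the propagation step does not match the structure of the difficulty the lemma is addressing.

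\emph{Circularity in the seed estimate.} You invoke $u_{nn}(0)\le C$ from \eqref{B-3} as a given. But \eqref{B-3} is established only at the special point $x_0\in\partial\Omega$ where $m=\inf_{\partial\Omega}G\big((D^2u)'\big)/\widetilde f$ is attained in the proof of Lemma~\ref{cor-lemma}; it is not a uniform bound on $u_{nn}$ over $\partial\Omega$, and the origin in the present lemma is an \emph{arbitrary} boundary point. A uniform bound on $u_{nn}$ is exactly what Lemma~\ref{lemma-2} is being used to produce (see \eqref{jw-14} and \eqref{jw-39}--\eqref{jw-38}). So the seed step assumes the conclusion. In fact the paper's argument needs no pointwise seed at the origin at all: the barrier argument directly produces the bound on the relevant piece of $\partial\Omega$.

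\emph{The propagation step misses the essential machinery.} You propose running the maximum principle with $S_k^{ij}\partial_{ij}$, $\Psi$ from \eqref{BC2-6}, and the bound \eqref{bd-2}. This is essentially the Section~4 argument, which only yields the $O(1)$ bound \eqref{BC2-3}, not the $\sqrt{b_\alpha}$ scaling. To get the scaling, the paper replaces $S_k^{ij}$ by $F^{ij}$ with $F=\sigma_k^{1/k}$ (concavity plus degree-one homogeneity are used in \eqref{jw-5}, \eqref{jw-37}), and more importantly must first prove $|Du|\le Cb_\alpha$ in the small box (see \eqref{jw-8}, \eqref{jw-33}) \emph{before} differentiating the equation, because that gradient bound is what controls the interior term $C|Du|\sum F^{ii}$ in \eqref{jw-10}, \eqref{jw-36}. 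That gradient bound is itself obtained by a separate two-sided barrier argument for $u$ (using the lower barrier $\ul u$ in \eqref{add-8} and the upper barrier $\ol u$ with $\det(D^2\ol u)=0$), together with convexity of $u$ to propagate boundary derivative bounds into the interior. None of this appears in your proposal: convexity is used only for the Cauchy--Schwarz step, while in the paper it is the load-bearing hypothesis throughout Sections~6--7. Also, the correct scale of the subdomain is $x_n\lesssim b_\alpha$ (hence tangentially $|x'|\lesssim\sqrt{b_\alpha}$), not "$\delta\sim\sqrt{b_\alpha}$" in all directions; and on the lateral piece $\partial_3\omega$ the inductive hypothesis is used not to control $\Xi^\pm$ but to prove the quantitative lower bound \eqref{jw-24} on $u$ itself, which feeds into the comparison $\ul u\le\tfrac12 u$.

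In short, the high-level shape (barriers plus downward induction) is right, but the seed step is circular and the barrier scheme as written does not see the degenerate scaling. The actual mechanism is: build scaled lower/upper barriers for $u$ to get $|u_\nu|\le Cb_\alpha$ on a sub-box, propagate to a full gradient bound $|Du|\le Cb_\alpha$ using convexity, then run the $F^{ij}$ maximum principle for $\nabla_\beta(u-\varphi)$ with barriers $v,w$ (or $\ul u - u$, $v$ in Section~7) constructed to have exactly the $\sqrt{b_\alpha}$ boundary growth.
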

Let $\omega_1 := \{x \in \Omega: x_n < \epsilon_1 b_{n-1}\}$ with a positive constant $\epsilon_1$ sufficiently small to be chosen later. In this section, we prove
\begin{equation}
\label{jw-12}
|\nabla_{\nu \xi}u| \leq C \sqrt{b_{n-1}} \mbox{ on } \partial \omega_1 \cap \partial \Omega,
\end{equation}
for any convex solution $u$ of \eqref{1-1}, where $\xi$ is any unit tangent vector of $\partial \Omega$.
Note that \eqref{jw-12} implies Lemma \ref{lemma-2} for $\alpha = n-1$ and that Lemma \ref{lemma-2} follows
immediately for the case $k=2$.

Since $f^{1/(k-1)} \in C^{1,1} (\ol \Omega)$, we have
\begin{equation}
\label{jw-1}
|\tilde{f}_\alpha (0)| \leq C \sqrt{\tilde{f} (0)}, \mbox{ for } \alpha \leq n-1,
\end{equation}
where $\tilde{f} := f^{1/(k-1)}$ and the constant $C$ depends only on $|\tilde{f}|_{C^{1,1} (\ol \Omega)}$.
By \eqref{cor-1}, \eqref{jw-1} and Taylor's expansion of $\tilde{f}$, we have
\[
\tilde{f} (x) \leq \tilde{f} (0) + \sum_{i=1}^n \tilde{f}_i (0) x_i + C |x|^2
   \leq C \left(\sigma_{k-1}^{1/(k-1)} (b) + \sigma_{k-1}^{1/2(k-1)} (b) |x'| + x_n + |x|^2\right)
\]
in $\Omega$ near the origin, where $x'=(x_1,\cdots, x_{n-1})$. It follows that
\begin{equation}
\label{taylor}
f (x) \leq C \left(\sigma_{k-1} (b) + \sqrt{\sigma_{k-1} (b)} |x'|^{k-1} + x_n^{k-1} + |x|^{2 (k-1)}\right)
\end{equation}
in $\Omega$ near the origin.
Define
$\omega := \{x \in \Omega: x_n < 2 b_{n-1}\}$. Therefore, we have
\begin{eqnarray}\label{add1}
|x'|\leq C\sqrt{b_{n-1}}, \text{ for } x\in\overline{\omega}
\end{eqnarray}
by \eqref{BC2-1}.
In the following, we always suppose $b_{n-1}$ is sufficiently small, otherwise by \eqref{BC2-3}, we have got \eqref{jw-12}.
Thus, by \eqref{BC2-1} and \eqref{taylor} we get
\begin{equation}
\label{taylor-1}
f \leq C b_{n-1}^{k-1} \mbox{ in } \omega.
\end{equation}
Subtracting $D u (0) \cdot x + u (0)$, we may assume $\inf_\Omega u = u (0) = 0$ and $D u (0) = 0$.
By Taylor's expansion, we have
\begin{equation}
\label{jw-2}
\varphi (x', \rho (x')) = \frac{1}{2} \sum_{\alpha \leq n-1} b_\alpha x_\alpha^2 +
  \frac{1}{6} \sum_{\alpha, \beta, \gamma \leq n-1} \varphi_{\alpha \beta \gamma} (0) x_\alpha x_\beta x_\gamma
    + O (|x'|^4),
\end{equation}
near the origin, where $\varphi_{\alpha \beta \gamma} (x') = \frac{\partial^3 \varphi (x', \rho (x'))}{\partial x_\alpha \partial x_\beta \partial x_\gamma}$.
Since $u = \varphi \geq 0$ on $\partial \Omega$ and \eqref{add1}, we find
\begin{equation}
\label{jw-3}
|\varphi_{\alpha \beta \gamma} (0)| \leq C \sqrt{b_{n-1}}.
\end{equation}
We extend the boundary function $\varphi$ to be the right hand side of \eqref{jw-2} near the origin in $\Omega$ and still denote it by $\varphi$.
By the convexity of $u$, the maximum value  of $u$ on $ \{x_n = 2 b_{n-1}\}$ achieves on $\partial\Omega$.
Therefore, again by convexity, we have
\begin{equation}
\label{jw-4}
|u - \varphi| \leq \sup_{\partial\Omega\cap\partial \overline{\omega}}(|u|+|\varphi|)\leq C b_{n-1}^2 \mbox{ on } \ol \omega.
\end{equation}
Let $d (x) := \mathrm{dist} (x, \partial \Omega)$ and
\[
v := - d + \frac{1}{8 b_{n-1}} d^2.
\]
We have $d \leq x_n$ near the origin so that $v \leq 0$ in $\omega$. Moreover, at $x=0$, we have
\begin{eqnarray}\label{d2v}
D^2v\geq \mathrm{diag}\left\{\frac{\kappa_1}{2},\cdots,\frac{\kappa_{n-1}}{2},\frac{1}{4b_{n-1}}\right\},
\end{eqnarray}
where $\kappa_1,\cdots, \kappa_{n-1}$ are the principal curvatures of $\partial \Omega$ at origin.
Thus  $v$ is convex near origin and
\[
\sigma_k (D^2 v) \geq \frac{\delta_0}{b_{n-1}} \mbox{ in } \omega
\]
for some positive constant $\delta_0$ because $\Omega$ is uniformly convex.
Let
\[
F (r) = \sigma_k^{1/k} (\lambda(r)) \mbox{ with } \lambda(r) \in \Gamma_k
\]
where $r$ is a symmetric matrix  and $\lambda(r)$ is the eigenvalue vector of $\lambda$,
and
\[
F^{ij} = \frac{\partial F(D^2 u)}{\partial u_{ij}}.
\]
By using \eqref{d2v}, $D^2v-\delta_1I$ is a positive definite matrix for some sufficiently small constant $\delta_1$, where $I$ is the identical matrix. By the concavity of $F$ and the uniformly convexity of $\Omega$, we find
\begin{equation}
\label{jw-5}
F^{ij} v_{ij} \geq \delta_1 \left(\frac{1}{b_{n-1}^{1/k}} + \sum_{i=1}^n F^{ii}\right)
  \mbox{ in } \omega.
\end{equation}
Let $\omega_0 := \{x \in \Omega: x_n < \epsilon b_{n-1}\}$, where $\epsilon$ is a positive constant sufficiently small to be determined.
For any fixed $y = (y_1, \cdots, y_n)
\in \partial \omega_0 \cap \partial \Omega$, we have
\begin{equation}
\label{add-2}
x_n - y_n \geq (2 - \epsilon) b_{n-1} \mbox{ on } \partial \omega \cap \{x_n = 2 b_{n-1}\}.
\end{equation}
Next, for any $x = (x_1, \cdots, x_n) \in \partial \omega \cap \partial \Omega$, we have
\[
\begin{aligned}
x_n - y_n = \,& \sum_{\beta = 1}^{n-1}\rho_\beta (y') (x_\beta - y_\beta) + O (|x'-y'|^2)\\
   \geq \,& \sum_{\beta = 1}^{n-1}\rho_\beta (y') (x_\beta - y_\beta) - \kappa |x'-y'|^2
\end{aligned}
\]
by Taylor's expansion of $\rho$ at $y'$, where $\kappa$ is a positive constant depending only on the principal curvatures of $\partial \Omega$.
Let
\[
L (x) := \sum_{\beta = 1}^{n-1}\rho_\beta (y') (x_\beta - y_\beta).
\]
It follows that
\begin{equation}
\label{add-1}
w (x) := x_n - y_n - L (x) + \kappa |x'-y'|^2 \geq 0 \mbox{ on } \partial \omega \cap \partial \Omega.
\end{equation}
Note that $\rho_\beta (0) = 0$ for each $1 \leq \beta \leq n-1$. We find, by \eqref{BC2-1},
\[
|L (x)| \leq |D \rho (y')| \cdot |x'-y'| \leq C |y'| \cdot |x'-y'| \leq C \sqrt{\epsilon} b_{n-1}
\]
for any $x \in \ol \omega$. In particular, by \eqref{add-2}, we have
\begin{equation}
\label{add-3}
w (x) \geq (2 - \epsilon - C \sqrt{\epsilon}) b_{n-1} \geq b_{n-1}
\end{equation}
on $\partial \omega \cap \{x_n = 2 b_{n-1}\}$ by fixing $\epsilon$ small enough.
Thus, by \eqref{jw-4}, \eqref{add-1} and \eqref{add-3}, we have
\begin{equation}
\label{jw-17}
|u - \varphi| \leq C b_{n-1} w \mbox{ on } \partial \omega.
\end{equation}
From \eqref{taylor-1}, \eqref{jw-17}, \eqref{jw-5} and using Lemma \ref{cor-lemma}, we derive
\begin{equation}
\label{jw-7}
\begin{aligned}
F^{ij} (b_{n-1} (A_1 v - A_2 w) \pm (u - \varphi))_{ij} \geq \,& 0 \mbox{ in } \omega\\
  b_{n-1} (A_1 v - A_2 w) \pm (u - \varphi) \leq \,& 0 \mbox{ on } \partial \omega
\end{aligned}
\end{equation}
by choosing $A_1 \gg A_2 \gg 1$. It follows from the maximal principle that
\begin{equation}
\label{plus2}
|u_{\nu} (y)| \leq C b_{n-1},
\end{equation}
for $y\in \partial\Omega\cap\partial \omega_0$.
By \eqref{BC2-1}, \eqref{jw-2}, \eqref{plus2} and the convexity of $u$, we have
\begin{equation}
\label{plus1}
\sup_{\ol \omega_0} |u_\alpha| \leq \sup_{\partial \omega_0 \cap \partial \Omega} |\varphi_\alpha - u_n \rho_\alpha| \leq C b_{n-1}^{3/2}, \mbox{ for each } 1 \leq \alpha \leq n-1.
\end{equation}
Let  $\epsilon_1 = \epsilon/2$. For any $x = (x', \epsilon_1 b_{n-1}) \in \partial \omega_1 \cap
\{x_n = \epsilon_1 b_{n-1}\}$, by the convexity of $u$, \eqref{jw-4} and \eqref{plus1},
\begin{equation}
\label{add-6}
- \epsilon_1 b_{n-1} u_n (x) \leq u(0) - u(x) + \sum_{\alpha = 1}^{n-1} x_\alpha u_\alpha (x) \leq C b_{n-1}^2.
\end{equation}
It follows that
\begin{equation}
\label{plus3}
u_n (x) \geq - \frac{C}{\epsilon_1} b_{n-1}.
\end{equation}
On the other hand, we fix a point $y = (y', \epsilon b_{n-1}) \in \partial \omega_0 \cap \partial \Omega$.
By the convexity of $u$, \eqref{jw-4} and \eqref{plus1}, we find
\begin{equation}
\label{add-7}
\epsilon_1 b_{n-1} u_n (x) \leq u(y) - u(x) + \sum_{\alpha = 1}^{n-1} (y_\alpha - x_\alpha) u_\alpha (x) \leq C b_{n-1}^2
\end{equation}
and we obtain
\begin{equation}
\label{plus4}
u_n (x) \leq \frac{C}{\epsilon_1} b_{n-1}.
\end{equation}
Combining \eqref{plus1}, \eqref{plus3} with \eqref{plus4}, we get
\[
|Du| \leq C b_{n-1} \mbox{ on } \partial \omega_1.
\]
By the convexity of $u$ again, we have
\begin{equation}
\label{jw-8}
|Du| \leq C b_{n-1} \mbox{ in } \omega_1.
\end{equation}
In the following, we always denote $\nabla_{\alpha}=\nabla_{\tau_{\alpha}}$.
Note that $|\rho_\alpha| \leq C |x'| \leq C b_{n-1}^{1/2}$ on $\ol \omega_1$.
By \eqref{plus1} and \eqref{jw-8}, we see
$|\nabla_\alpha u| \leq C b_{n-1}^{3/2}$ on $\overline{\omega}_1$. It follows that
\begin{equation}
\label{jw-6}
|\nabla_\alpha (u - \varphi)| \leq C \sqrt{b_{n-1}} w \mbox{ on } \partial \omega_1
\end{equation}
by using \eqref{add-3}.
Since $\tau_\alpha$ is a tangential vector field of $\partial \Omega$ near the origin, we have
\begin{equation}
\label{smooth}
|\nabla_\alpha \tilde{f}| \leq C\sqrt{\tilde{f}}.
\end{equation}
near the origin. The proof of \eqref{smooth} can be found in Lemma 3.1 of \cite{Blocki03}.
Differentiating the equation $F (D^2 u) = f^{1/k}$ and using Lemma \ref{cor-lemma}, we get
\begin{equation}
\label{jw-10}
\begin{aligned}
|F^{ij} (\nabla_\alpha u)_{ij}| \leq \,& |\nabla_\alpha f^{1/k}| + C \sum_{i,j=1}^n F^{ij} u_{ij}
   + C |Du| \sum_{i=1}^n F^{ii}\\
    \leq \,& Cf^{\frac{1}{k}-\frac{1}{2(k-1)}} + C b_{n-1} \sum_{i=1}^n F^{ii}\\
    \leq\, &C b_{n-1}^{1/2-1/k}+ C b_{n-1} \sum_{i=1}^n F^{ii}
    \end{aligned}
\end{equation}
in $\omega_1$. By \eqref{jw-3}, we have
\begin{equation}
\label{jw-11}
|F^{ij} (\nabla_\alpha \varphi)_{ij}| \leq C \sqrt{b_{n-1}} \sum_{i=1}^n F^{ii} \mbox{ in } \omega_1.
\end{equation}
Combining \eqref{jw-6}, \eqref{jw-10}, \eqref{jw-11} and \eqref{jw-5}, we can choose positive constants
$A_1 \gg A_2 \gg 1$ again such that
\[
\begin{aligned}
F^{ij} \left(\sqrt{b_{n-1}} (A_1 v - A_2 w) \pm \nabla_\alpha (u - \varphi)\right)_{ij} \geq \,& 0 \mbox{ in } \omega_1\\
  A \sqrt{b_{n-1}} (A_1 v - A_2 w) \pm \nabla_\alpha (u - \varphi) \leq \,& 0 \mbox{ on } \partial \omega_1
\end{aligned}
\]
By the maximal principle again we get
\[
\sqrt{b_{n-1}} (A_1 v - A_2 w) \pm \nabla_\alpha (u - \varphi) \leq 0 \mbox{ in } \omega_1
\]
and
\begin{equation}
\label{jw-13}
|\nu(\nabla_{\alpha}u) (y)| \leq C \sqrt{b_{n-1}} \mbox{ for } \alpha \leq n-1,
\end{equation}
for any $y\in\partial\Omega\cap\partial\omega_1$, where $\nu$ is the unit interior normal of $\partial \Omega$ at $y$. Therefore, in view of \eqref{jw-8}, we get \eqref{jw-12}.

Now we consider the case $k = 2$.
We have, at the origin,
\[
u_{nn} \sigma_1 (b) - \sum_{\alpha \leq n-1} u_{n \alpha}^2 + \sigma_2 (b) = f.
\]
Thus, by \eqref{cor-1} and \eqref{jw-12}, we obtain
\begin{equation}
\label{jw-14}
u_{nn} (0) \leq C.
\end{equation}

\section{Estimates for convex solutions, the case $k \geq 3$}

In this section, we continue to prove Lemma \ref{lemma-2} for the case $k \geq 3$.

As \cite{GTW99}, we will prove Lemma \ref{lemma-2} by induction. Our induction hypothesis is that for some given $\alpha = n - (k-1), \ldots, n-2$ and any index
$\alpha + 1\leq \gamma\leq  n-1$, there exist two positive constants $\theta_\gamma$ and $C$ depending only on $\Omega$,
$|\varphi|_{C^{3,1} (\ol \Omega)}$ and $|\tilde{f}|_{C^{1,1} (\ol \Omega)}$ such that if any point $(x', \rho (x')) \in \partial \Omega$
satisfies $|x_\beta| \leq \theta_\gamma \sqrt{b_\gamma}$ for any $1 \leq \beta \leq \gamma$
and $|x_\beta| \leq \theta_\gamma \frac{b_\gamma}{\sqrt{b_\beta}}$ for any $\gamma + 1 \leq \beta \leq n-1$, we have the estimates
\begin{equation}
\label{jw-19}
|\nabla_{\nu \tau_\beta} u (x', \rho (x'))| \leq C \sqrt{b_\gamma}
\end{equation}
for $1\leq \beta \leq \gamma$.

From \eqref{jw-12}, we see that \eqref{jw-19} holds for $\alpha = n-2$.
In the following, we will prove there exist positive constants $\theta_\alpha$ and $C$ depending only on $\Omega$,
$|\varphi|_{C^{3,1} (\ol \Omega)}$ and $|\tilde{f}|_{C^{1,1} (\ol \Omega)}$ such that if any point $(x', \rho (x')) \in \partial \Omega$
satisfies  $|x_\beta| \leq \theta_\alpha \sqrt{b_\alpha}$ for any $1 \leq \beta \leq \alpha$
and $|x_\beta| \leq \theta_\alpha \frac{b_\alpha}{\sqrt{b_\beta}}$ for any $\alpha + 1 \leq \beta \leq n-1$, we have
\begin{equation}
\label{jw-21}
|\nabla_{\nu \tau_\beta} u (x', \rho (x'))| \leq C \sqrt{b_\alpha}
\end{equation}
for $1\leq \beta \leq \alpha$.

Let
\[
\omega := \{x \in \Omega: |x_\beta| < \delta \frac{b_\alpha}{\sqrt{b_\beta}}, \beta = \alpha + 1, \ldots, n-1,
   x_n < \delta^2 b_\alpha\},
\]
where $\delta$ is a positive  sufficiently small constant independent of $b_\alpha$ to be determined later.
By \eqref{jw-2} and that $u \geq 0$ in $\Omega$, we have
\begin{equation}
\label{jw-15}
|\varphi_{\mu \beta \gamma} (0)| \leq \left\{\begin{aligned}
& C \frac{\sqrt{b_\mu b_\beta b_\gamma}}{b_\alpha}, & \ \mbox{ if } \mu, \beta, \gamma > \alpha  \\
& C \frac{\sqrt{b_\beta b_\gamma}}{\sqrt{b_\alpha}}, & \ \mbox{ if } \mu \leq \alpha; \beta, \gamma > \alpha\\
& C \sqrt{b_\gamma}, & \ \mbox{ if } \mu, \beta \leq \alpha; \gamma > \alpha\\
& C \sqrt{b_\alpha}, & \ \mbox{ if } \mu, \beta, \gamma \leq \alpha
\end{aligned}\right.
\end{equation}
provided $b_\alpha$ is sufficiently small.
By \eqref{taylor} and the definition of $\omega$, we have
\begin{equation}
\label{jw-16}
f (x) \leq C b_{n-1} \cdots b_{\alpha+1} b_\alpha^{k+\alpha-n} \mbox{ in } \omega.
\end{equation}
We then note that
\begin{equation}
\label{jw-47}
u \leq C b_\alpha^2 \mbox{ in } \omega
\end{equation}
by the convexity of $u$.

Let
\[
\ul u (x) = \frac{\sigma}{2} \left(b_\alpha |\hat{x}|^2 + \sum_{\beta = \alpha + 1}^{n-1} b_\beta x_\beta^2\right)
    + \frac{1}{2} K x_n^2 - K^2 b_\alpha x_n,
\]
where $\hat{x} := (x_1, \ldots, x_\alpha)$, $\sigma$ and $K$ are sufficiently small and sufficiently large positive constants
to be chosen later. We then have
\begin{equation}
\label{jw-22}
\sigma_k (\lambda (D^2 \ul u)) \geq \sigma^{k-1} K b_{n-1} \cdots b_{\alpha+1} b_\alpha^{k+\alpha-n}.
\end{equation}

Next, we prove
\begin{equation}
\label{jw-23}
\ul u \leq \frac{1}{2} u
\end{equation}
on $\partial \omega$ by choosing suitable $\delta$, $\sigma$ and $K$. Note that
\[
\partial \omega = \partial_1 \omega \cup \partial_2 \omega \cup \partial_3 \omega,
\]
where $\partial_1 \omega := \partial \omega \cap \partial \Omega$, $\partial_2 \omega := \partial \omega \cap \{x_n = \delta^2 b_\alpha\}$
and $\partial_3 \omega := \partial \omega \cap \{|x_\beta| = \frac{\delta b_\alpha}{\sqrt{b_\beta}}$ for some $\beta = \alpha + 1, \ldots, n-1\}$.

We first consider any point $x \in \partial_1 \omega$. By \eqref{jw-2} and \eqref{jw-15},
\[
u (x) \geq \left(\frac{1}{2} - C \delta\right) \sum_{\beta = \alpha + 1}^{n-1} b_\beta x_\beta^2 - C b_\alpha |\hat{x}|^2
   \geq \frac{1}{4} \sum_{\beta = \alpha + 1}^{n-1} b_\beta x_\beta^2 - C b_\alpha |\hat{x}|^2
\]
provided $\delta$ is sufficiently small. By the uniformly convexity of $\Omega$, we have
\[
\ul u (x) \leq \frac{\sigma}{2} \sum_{\beta = \alpha + 1}^{n-1} b_\beta x_\beta^2 - b_\alpha \left(\frac{1}{2} K^2 x_n -
    \frac{\sigma}{2} |\hat{x}|^2\right) \leq \frac{\sigma}{2} \sum_{\beta = \alpha + 1}^{n-1} b_\beta x_\beta^2 - \frac{K^2}{4} b_\alpha |\hat{x}|^2
\]
if $\sigma$ is sufficiently small. Thus, \eqref{jw-23} holds on $\partial_1 \omega$ provided $\sigma$ is sufficiently small and $K$ is sufficiently large.

Next, for $x \in \partial_2 \omega$
\[
\ul u (x) \leq C \delta^2 b_\alpha^2 \sigma - \frac{K^2 \delta^2 b_\alpha^2}{2} \leq 0 \leq \frac{1}{2} u(x)
\]
provided $\sigma$ is sufficiently small or $K$ is sufficiently large.

For $\partial_3 \omega$, as \cite{GTW99}, we only consider the piece $\partial_3' \omega := \partial \omega \cap \left\{x_{n-1} = \frac{\delta b_\alpha}{\sqrt{b_{n-1}}}\right\}$
and other cases can be handled similarly. We first prove that
\begin{equation}
\label{jw-24}
u (x) \geq \frac{1}{4} \delta^2 b_\alpha^2 \mbox{ on } \partial_3' \omega \cap \{x_n < \epsilon_0 \delta^2 b_\alpha\}
\end{equation}
provided $\epsilon_0$ is sufficiently small. On $\partial_3' \omega \cap \{x_n \geq \epsilon_0 \delta^2 b_\alpha\}$, \eqref{jw-23} holds as on $\partial_2 \omega$. Therefore, we only consider the set $\partial_3' \omega \cap \{x_n < \epsilon_0 \delta^2 b_\alpha\} $.
Fix a point $x = (\hat{x}, \tilde{x}, x_n) \in \partial_3' \omega \cap \{x_n < \epsilon_0 \delta^2 b_\alpha\}$, where $\hat{x} = (x_1, \ldots, x_\alpha)$
and $\tilde{x} = (x_{\alpha+1}, \ldots, x_{n-1})$. We consider two cases.

\textbf{Case 1.} Suppose $\hat{x} = 0$. Let $x_0 = (0, \tilde{x}, \rho (0, \tilde{x})) \in \partial \Omega$. We may assume
$\delta \ll \min_{\alpha+1 \leq \gamma \leq n-1}\theta_{\gamma}$. Let $\tilde{\nu} := \tilde{\nu} (x') =
(- D\rho (x'), 1) \in \mathbb{R}^n$ for $x' \in \mathbb{R}^n$ near the origin.
Note that $|u_\gamma| = |\varphi_\gamma - \rho_\gamma u_n| \leq \sqrt{b_\alpha}$ on $\partial \omega \cap \partial \Omega$
for $1 \leq \gamma \leq n - 1$.
Therefore, $\nu = \frac{\tilde{\nu}}{|\tilde{\nu}|}$ and
for any point $y = (0, \tilde{y}, y_n) \in \partial \omega \cap \partial \Omega$, by our induction hypothesis,
we have
\begin{equation}
\label{jw-25}
\begin{aligned}
|u_{\tilde{\nu} (y')} (y)| \,& \leq |u_{\tilde{\nu} (0)}(0)| \\
    & + \sum_{\beta = \alpha+1}^{n-1} \sup_{\xi \in \partial \omega \cap \partial \Omega} \left(|\nabla_{\tilde{\nu}\tau_\beta} u (\xi)| + \sum_{\gamma=1}^{n-1}|\rho_{\gamma\beta} (\xi) u_\gamma (\xi)|\right) \cdot |y_\beta|
   \leq C b_\alpha
\end{aligned}
\end{equation}
by \eqref{jw-19} and that $D u (0) = 0$. It follows that $|u_\nu (y)| \leq C b_\alpha$. Therefore $|Du(y)|\leq Cb_{\alpha}$ since
$|u_{\tau_\gamma} (y)| = |\varphi_\gamma (y)| \leq C b_\alpha$ for each $1 \leq \gamma \leq n-1$.
Thus, by the convexity of $u$, $x_n<\epsilon_0\delta^2 b_{\alpha}$ and $\hat{x}=0$, we have
\begin{equation}
\label{jw-26}
\begin{aligned}
u (x) \geq u (x_0) - C b_\alpha |x - x_0| = \,& u (x_0) - C b_\alpha (x_n - \rho (0, \tilde{x}))\\
   \geq \,& u (x_0) - C \epsilon_0 \delta^2 b_\alpha^2.
\end{aligned}
\end{equation}
By \eqref{jw-15} and that $x_{n-1} = \frac{\delta b_\alpha}{\sqrt{b_{n-1}}}$, we have
\begin{equation}
\label{jw-29}
\begin{aligned}
u (x_0)& =\varphi(x_0)= \frac{1}{2} \sum_{\beta=\alpha+1}^{n-1} b_\beta x_\beta^2 + \frac{1}{6} \sum_{\xi, \beta, \gamma \geq \alpha+1} \varphi_{\xi \beta \gamma} (0) x_\xi x_\beta x_\gamma
    + O (|\tilde{x}|^4)\\
&\geq \frac{1}{2} \delta^2 b_\alpha^2 - C \delta^3 b_\alpha^2 \geq \frac{7}{16} \delta^2 b_\alpha^2
\end{aligned}
\end{equation}
provided $\delta$ is sufficiently small. Combining with \eqref{jw-26}, we have
\begin{equation}
\label{jw-27}
u (x) \geq \frac{3}{8} \delta^2 b_\alpha^2
\end{equation}
when $\epsilon_0$ is sufficiently small.

\textbf{Case 2.} Suppose $\hat{x} \neq 0$. We may assume $x \notin \partial \Omega$, otherwise \eqref{jw-27} can be derived
similar as \eqref{jw-29}. Let $x_0 = (0, \tilde{x}, x_n^0) = (0, \tilde{x}, \rho (0, \tilde{x})) \in \partial \Omega$
and $P$ be the 2-dimensional plane spanned by $x$ and the straight line through $x_0$ and parallel to the $x_n$-axis.
Note that $P \subset \{x_{n-1} = \frac{\delta b_\alpha}{\sqrt{b_{n-1}}}\}$.
Let $\gamma$ be the intersection of $\partial \Omega$ and $P$. It is clear that $x^* = (0, \tilde{x}, x_n + \varepsilon_1) \in P$,
where $\varepsilon_1 := - \sum_{\beta=1}^\alpha x_\beta \rho_\beta (0, \tilde{x})$. Note that
\[
\begin{aligned}
x_n^* - x_n^0 = \,& x_n - \sum_{\beta=1}^\alpha x_\beta \rho_\beta (0, \tilde{x}) - \rho (0, \tilde{x}) \\
 > \,& \rho (\hat{x}, \tilde{x}) - \sum_{\beta=1}^\alpha x_\beta \rho_\beta (0, \tilde{x}) - \rho (0, \tilde{x}) \geq 0
\end{aligned}
\]
by the convexity of $\rho$.
Suppose $$e_1=\frac{(\hat{x}, 0, - \varepsilon_1)}{\sqrt{|\hat{x}|^2+\varepsilon_1^2}}, \ \ e_2=\frac{(\varepsilon_1 \hat{x}, 0, |\hat{x}|^2)}{|\hat{x}|\sqrt{|\hat{x}|^2+\varepsilon_1^2}}.$$ It is easy to see that $e_1$ and $e_2$ are the unit tangent vector and unit normal vector of the curve $\gamma$
at $x_0$ respectively. By using the coordinate system $\{x_0; e_1, e_2\}$, the curve $\gamma$ can be written by
\begin{equation}
\label{proj}
\xi_2 = \kappa_0 \xi_1^2 + O (|\xi_1|^3) \mbox{ as } \xi_1 \rightarrow 0
\end{equation}
for some positive constant $\kappa_0$ depending only on $\partial \Omega$.
Thus, the straight line from $x$ to $x^*$ meets $\gamma$ at a point $\bar{x}$ satisfying
\begin{equation}
\label{jw-28}
c_0 |x - x^*| \leq |\bar{x} - x^*| \leq |\bar{x} - x|
\end{equation}
for some $0 < c_0 < 1$ depending only on $\partial \Omega$.

Let $\bar{x} = \bar{\xi}_1e_1+\bar{\xi}_2e_2$. Suppose  $\cos\theta = \langle e_2, E_n \rangle$, where
$E_n = (0, \ldots, 0, 1)$ is the direction of the $x_n$-axis.
Thus, we find $$\bar{\xi}_2 = (x_n^* - x_n^0) \cos \theta \text{ and } |\bar{\xi}_1| \leq C \sqrt{\bar{\xi}_2}$$ for some $C$ depending only on $\kappa_0$ by \eqref{proj},
since $|\varepsilon_1| \leq C \epsilon_0 \delta^2 b_\alpha$ and $x_n^* - x_n^0 \leq C \epsilon_0 \delta^2 b_\alpha$ which can be sufficiently  small. Next, we see
\[
E_n = \frac{- \varepsilon_1}{\sqrt{\varepsilon_1^2 + |\hat{x}|^2}} e_1 + \frac{|\hat{x}|}{\sqrt{\varepsilon_1^2 + |\hat{x}|^2}} e_2.
\]
It follows that, using the definition of $\varepsilon_1$,
\[
\bar{x}_n - x_n^0 = \frac{- \varepsilon_1 \bar{\xi}_1 + |\hat{x}| \bar{\xi}_2}{\sqrt{\varepsilon_1^2 + |\hat{x}|^2}}
   \leq |D \rho (0, \tilde{x})| |\bar{\xi}_1| + |\bar{\xi}_2| \leq C \epsilon_0 \delta^2 b_\alpha
\]
and therefore we get
\[
\bar{x}_n \leq C \epsilon_0 \delta^2 b_\alpha + x_n^0 \leq C \epsilon_0 \delta^2 b_\alpha.
\]
Thus, $\bar{x}\in\omega$ if $\epsilon_0$ is sufficiently small.

By \eqref{jw-2} and $\bar{x}\in\omega$, we have
\begin{equation}
\label{jw-30}
\begin{aligned}
u (\bar{x}) &= \frac{1}{2} \sum_{\beta=1}^{n-1} b_\beta \bar{x}_\beta^2 + + \frac{1}{6} \sum_{\xi, \beta, \gamma } \varphi_{\xi \beta \gamma} (0) \bar{x}_\xi \bar{x}_\beta \bar{x}_\gamma
    + O (|\bar{x}'|^4)\\
& \leq u (x_0) + \frac{1}{2} \sum_{\beta=1}^{\alpha} b_\beta \bar{x}_\beta^2
   + C \delta^3 b_\alpha^2.
  \end{aligned}
\end{equation}
By \eqref{BC2-1}, we have
\begin{equation}
\label{jw-31}
\begin{aligned}
\frac{1}{2} \sum_{\beta=1}^{\alpha} b_\beta \bar{x}_\beta^2 \leq C b_\alpha \bar{x}_n + b_\alpha O (|\bar{x}'|^3)
  \leq C b_\alpha \bar{x}_n + C \delta^3 b_\alpha^2.
\end{aligned}
\end{equation}
Since $|\varepsilon_1| \leq C \epsilon_0 \delta^2 b_\alpha$, using the same argument as Case 1, \eqref{jw-26} and \eqref{jw-27}
hold for $x = x^*$ if $\epsilon_0$ is small enough, namely
$$u (x^*) \geq u (x_0) - C \epsilon_0 \delta^2 b_\alpha^2, \ \  \  u (x^*) \geq \frac{3}{8} \delta^2 b_\alpha^2.$$
Combining with \eqref{jw-30}, \eqref{jw-31}, we get
\begin{equation}
\label{jw-32}
\begin{aligned}
u (\bar{x}) \leq \,& u (x^*) +C\epsilon_0\delta^2b_{\alpha}^2+ Cb_{\alpha}\bar{x}_n
   + C \delta^3 b_\alpha^2\\
\leq \,& u (x^*) + C \left(\epsilon_0 + \delta\right) \delta^2 b_\alpha^2 \leq \left(1+\frac{c_0}{3}\right) u (x^*)
\end{aligned}
\end{equation}
provided $\epsilon_0$ and $\delta$ are sufficiently small.
Therefore, by \eqref{jw-28} and the convexity of $u$, we have
\[
u (x) \geq u (x^*) + \frac{1}{c_0} \min\{0, u(x^*) - u (\bar{x})\} \geq \frac{2}{3} u (x^*) \geq \frac{1}{4} \delta^2 b_\alpha^2
\]
and \eqref{jw-24} is proved. Note that
\[
\ul u (x) \leq C \sigma \delta^2 b_\alpha^2 - \frac{K^2}{2} b_\alpha x_n \mbox{ in } \omega.
\]
Thus, \eqref{jw-23} holds when $\sigma$ is sufficiently small and $K$ is sufficiently large. Then
\eqref{jw-23} is valid on $\partial \omega$.

By \eqref{jw-16} and \eqref{jw-22}, we can further fix $K$ sufficiently large such that
\begin{equation}
\label{jw-46}
\sigma_k (\lambda (D^2 \ul u)) \geq \sup_\omega f \mbox{ in } \omega.
\end{equation}
We have constructed a lower barrier $\ul u$ vanishing at the origin with $\ul u_\nu (0) = - K^2 b_\alpha$.

Now we construct a lower barrier for an arbitrary point $x_0 = (x_1^0, \ldots, x_n^0)
\in \partial \Omega \cap \partial \omega_\epsilon$, where
\[
\omega_\epsilon := \{x \in \Omega: |x_\beta| < \epsilon \delta \frac{b_\alpha}{\sqrt{b_\beta}},
    \beta = \alpha + 1, \ldots, n-1, x_n < \epsilon^2 \delta^2 b_\alpha\}
\]
and $\epsilon$ is a positive constant sufficiently small to be determined later. For $x \in \ol \omega$, write $y = x - x_0$ and define
\begin{equation}
\label{add-8}
\ul u (x) = \ul u (y) = \frac{\sigma}{2} \left(b_\alpha |\hat{y}|^2
    + \sum_{\beta = \alpha + 1}^{n-1} b_\beta y_\beta^2\right) + \frac{1}{2} K y_n^2 - K^2 b_\alpha \bar{y}_n
\end{equation}
as above, where
\[
\bar{y}_n := y_n - \sum_{\beta=1}^{n-1} \rho_\beta (x'_0) y_\beta.
\]
Define $l (y')$ to be the linear function
\[
l (y') := u (x_0) + \sum_{\beta = 1}^{n-1} \nabla_\beta u (x_0) y_\beta.
\]
Now we prove $\ul u \leq u - l (y')$ on $\partial \omega$ by choosing suitable positive constants $\sigma$, $K$
and $\epsilon$.

First we consider $x \in \partial_1 \omega$ as before. Let
$\tilde{b}_\beta := \max\{b_\alpha, b_\beta\}$ for $1 \leq \beta \leq n-1$. Thus, by \eqref{jw-2} and \eqref{jw-15},
we have
\[
\frac{\partial^2 \varphi}{\partial x_\beta^2} (x', \rho (x')) \geq b_\beta -  C \delta \tilde{b}_\beta,
  \  \beta = 1, \ldots, n-1
\]
and
\[
\left|\frac{\partial^2 \varphi}{\partial x_\beta \partial x_\mu} (x', \rho (x'))\right| \leq C \delta
  \sqrt{\tilde{b}_\beta \tilde{b}_\mu},\  \beta, \mu = 1, \ldots, n-1; \beta \neq \mu
\]
for any $(x', \rho (x')) \in \partial_1 \omega$. Hence by Taylor's expansion of $u (x', \rho(x'))
= \varphi (x', \rho (x'))$ at $x_0$, we obtain
\begin{equation}
\label{jw-40}
u (x) \geq l (y') + \frac{1}{4} \sum_{\beta=1}^{n-1} b_\beta y_\beta^2 - C b_\alpha |\hat{y}|^2
\end{equation}
provided $\delta$ is sufficiently small.
By Taylor's expansion of $\rho$ at $x'_0$ and the uniform convexity of $\Omega$, we get
\[
y_n = \rho (x) - \rho (x_0) \geq \sum_{\beta=1}^{n-1} \rho_\beta (x'_0) y_\beta + \kappa |y'|^2,
\]
where $\kappa$ is a positive constant depending only on the principal curvatures of $\partial \Omega$.
It follows that
\begin{equation}
\label{jw-41}
\bar{y}_n \geq \kappa |y'|^2 \mbox{ on } \partial_1 \omega.
\end{equation}
Next, since $|\rho_\beta (x'_0)| \leq C |x'_0| \leq C \epsilon \delta \sqrt{b_\alpha}$
for any $\beta = 1, \ldots, n-1$, we have
\begin{equation}
\label{jw-45}
y_n^2 = \left(\bar{y}_n + \sum_{\beta = 1}^{n-1} \rho_\beta (x'_0) y_\beta\right)^2
  \leq 2 \bar{y}_n^2 + C \epsilon^2 \delta^2 b_\alpha |y'|^2.
\end{equation}
Therefore, by \eqref{jw-41}, \eqref{jw-45} and $|\bar{y}_n| \leq C b_\alpha$, we have
\begin{equation}
\label{jw-42}
\begin{aligned}
\ul u (x) \leq \,& \frac{\sigma}{2} \sum_{\beta = \alpha + 1}^{n-1} b_\beta y_\beta^2
   - b_\alpha \left(\frac{1}{2} K^2 \bar{y}_n - \frac{\sigma}{2} |\hat{y}|^2 - C \epsilon^2 \delta^2 |y'|^2\right)\\
     \leq \,& \frac{\sigma}{2} \sum_{\beta = \alpha + 1}^{n-1} b_\beta y_\beta^2 - \frac{K^2}{4} b_\alpha |y'|^2
\end{aligned}
\end{equation}
provided $\sigma$ is sufficiently small and $K$ is sufficiently large.
Combing \eqref{jw-40} and \eqref{jw-42} we obtain $u (x) - l (y') \geq \ul u (x)$ if $K$ is large enough.

For $x \in \partial_2 \omega$, we note that $\bar{y}_n \geq (1 - \epsilon^2 - C \epsilon) \delta^2 b_\alpha$ since
$x_0 \in \partial \Omega \cap \partial \omega_\epsilon$.
By \eqref{jw-2} and \eqref{jw-15} again, we find
\begin{equation}
\label{jw-44}
|l (y')| \leq C (\epsilon^2 + \epsilon) \delta^2 b_\alpha^2.
\end{equation}
It follows that
\[
\begin{aligned}
\ul u (x) + l (y') \leq \,& C \sigma \delta^2 b_\alpha^2 + CK \delta^2 b_\alpha^2
   - K^2 (1 - \epsilon^2 - C \epsilon) \delta^2 b_\alpha^2
     + C (\epsilon^2 + \epsilon) \delta^2 b_\alpha^2\\
     < \,& 0 \leq u (x)
\end{aligned}
\]
if $K$ is sufficiently large and $\epsilon$ is sufficiently small.

Now we consider $x \in \partial_3 \omega$. We only need to consider the case $x \in \partial'_3 \omega$
as before. First we note that if $x_n \geq \epsilon_0 \delta^2 b_\alpha$, where $\epsilon_0$ is the constant defined by \eqref{jw-24},
\[
\bar{y}_n \geq (\epsilon_0 - \epsilon^2 - C \epsilon) \delta^2 b_\alpha.
\]
Thus, we have $\ul u (x) + l (y') < 0 < u (x)$ if $\epsilon \ll \epsilon_0$ and $K$ is large enough as in the case $x \in \partial_2 \omega$.

Now we consider $x \in \partial_3' \omega \cap \{x_n < \epsilon_0 \delta^2 b_\alpha\}$. By \eqref{jw-24} and \eqref{jw-44}, we have
\[
u (x) - l(y') \geq \left(\frac{1}{4} - C (\epsilon + \epsilon^2)\right) \delta^2 b_\alpha^2
    \geq \frac{1}{8} \delta^2 b_\alpha^2
\]
provided $\epsilon$ is sufficiently small. By \eqref{jw-45}, we have
\[
\ul u (x) \leq C (\sigma + \epsilon) \delta^2 b_\alpha^2 - \frac{K^2}{2} b_\alpha \bar{y}_n.
\]
It follows that $\ul u (x) \leq u (x) - l(y')$ if $\sigma$ and $\epsilon$ are small enough and $K$ is large enough.
We can further fix $K$ sufficiently large such that \eqref{jw-46} holds.

To construct an upper barrier for $x_0 \in \partial \Omega \cap \partial \omega_\epsilon$, we define
\[
\ol u (x) = \ol u (y) := M \left(\frac{1}{2} b_\alpha |\hat{y}|^2
   + \frac{1}{2} \sum_{\beta = \alpha+1}^{n-1} b_\beta y_\beta^2 + b_\alpha \bar{y}_n\right),
\]
where $M$ is a sufficiently large constant to be determined later.
By \eqref{jw-47}, we can fix $\epsilon$ sufficiently small and $M$ sufficiently large such that
\[
u - l (y') \leq \ol u \mbox{ on } \partial \omega.
\]
Furthermore, we find
\[
\det (D^2 \ol u) = 0 \mbox{ in } \omega.
\]
By the convexity of $u$ and the maximal principle, we have
\[
u - l (y') \leq \ol u \mbox{ in } \omega.
\]
Thus, we have
\begin{equation}
\label{add-4}
|u_\nu| \leq C b_\alpha \mbox{ on } \partial_1 \omega_\epsilon.
\end{equation}
Similar to \eqref{plus1}, we find, by \eqref{BC2-1}, \eqref{jw-2}, \eqref{add-4}
and the convexity of $u$,
\begin{equation}
\label{add-5}
\begin{aligned}
\sup_{\ol \omega} |u_\beta| \leq \,& \sup_{\partial \omega \cap \partial \Omega} |\varphi_\beta - u_n \rho_\beta| \leq C b_{\alpha}^{3/2}, \mbox{ for } 1 \leq \beta \leq \alpha,\\
\sup_{\ol \omega} |u_\beta| \leq \,& \sup_{\partial \omega \cap \partial \Omega} |\varphi_\beta - u_n \rho_\beta| \leq C \sqrt{b_\beta} b_{\alpha}, \mbox{ for } \alpha+ 1 \leq \beta \leq n-1.
\end{aligned}
\end{equation}
As \eqref{add-6} and \eqref{add-7}, for any $x = (x', \epsilon^2 \delta^2 b_\alpha) \in \partial_2 \omega_{\epsilon}$,
we have
\[
- \epsilon^2 \delta^2 b_{\alpha} u_n (x) \leq u(0) - u(x) + \sum_{\beta = 1}^{n-1} x_\beta u_\beta (x) \leq C b_\alpha^2
\]
and
\[
\delta^2 (1 - \epsilon^2) b_{\alpha} u_n (x) \leq u(y) - u(x) \leq C b_{\alpha}^2, \mbox{ with } y = (x', \delta^2 b_\alpha) \in \partial_2 \omega
\]
by \eqref{jw-47} and \eqref{add-5}. It follows that $|u_n| \leq C b_\alpha$ on $\partial_2 \omega_{\epsilon}$. Therefore,
by the convexity of $u$ again, we can also get a bound $|u_n| \leq C b_\alpha$ on $\partial_3 \omega_{\epsilon}$.
We then obtain
\begin{equation}
\label{jw-33}
|D u| \leq C b_\alpha \mbox{ on } \overline{\omega}_\epsilon.
\end{equation}
To proceed we consider $\nabla_\beta (u - \varphi)$ for $1 \leq \beta \leq \alpha$.
By \eqref{jw-15}, \eqref{BC2-1}, \eqref{add-5} and \eqref{jw-33}, we have
\begin{equation}
\label{jw-34}
|\nabla_\beta (u-\varphi)| \leq C b_\alpha^{3/2} \mbox{ on } \ol \omega_\epsilon.
\end{equation}
By \eqref{jw-15} again, we find
\begin{equation}
\label{jw-35}
|F^{ij} (\nabla_\beta \varphi)_{ij}| \leq C \sqrt{b_\alpha} \left(\sum_{i=1}^{\alpha} F^{ii} + \sum_{i=\alpha+1}^{n-1} \frac{b_i}{b_\alpha} F^{ii}\right)
   \mbox{ in } \omega_\epsilon.
\end{equation}
Let
\[
m := b_{n-1} \cdots b_{\alpha+1} b_\alpha^{k+\alpha-n}.
\]
We find
\[
m^{-1/2(k-1)} \leq (b_\alpha^{k-1})^{-1/2(k-1)} = b_\alpha^{-1/2}.
\]
By \eqref{smooth}, \eqref{jw-16} and \eqref{jw-33}, we have
\begin{equation}
\label{jw-36}
\begin{aligned}
|F^{ij} (\nabla_\beta u)_{ij}| \leq \,& |\nabla_\beta f^{1/k}| + C \left(f^{1/k} + b_\alpha \sum_{i=1}^n F^{ii}\right)\\
  \leq \,& C m^{1/k} m^{-1/2(k-1)} + C b_\alpha \sum_{i=1}^n F^{ii}\\
   \leq \,& C b_\alpha^{-1/2} m^{1/k} + C b_\alpha \sum_{i=1}^n F^{ii}
\end{aligned}
\end{equation}
in $\omega_\epsilon$. Now we consider an arbitrary point $x_0 = (x_1^0, \ldots, x_n^0)
\in \partial \Omega \cap \partial \omega_{\epsilon_1}$ with $\epsilon_1 \ll \epsilon$ to be
determined later. Let $\ul u$ be the function defined in \eqref{add-8}.
By the concavity of $F$, we have
\begin{equation}
\label{jw-37}
\frac{1}{\sqrt{b_\alpha}} F^{ij} (\ul u - u)_{ij} \geq c_0 \left(\sqrt{b_\alpha}\sum_{i=1}^{\alpha} F^{ii} +
  \sqrt{b_\alpha} \sum_{i=\alpha+1}^{n-1} \frac{b_i}{b_\alpha} F^{ii} + F^{nn}
   + \frac{m^{1/k}}{\sqrt{b_\alpha}}\right)
\end{equation}
in $\omega_\epsilon$ for some positive constant $c_0$ which may depend on $\sigma$ and $K$. Let
\[
v = \frac{1}{\sqrt{b_\alpha}} \sum_{\beta=\alpha+1}^{n-1} b_\beta y_\beta^2 + \sqrt{b_\alpha} \bar{y}_n.
\]
Thus, for any $x \in \partial_2 \omega_\epsilon$, we have
\[
v (x) \geq \sqrt{b_\alpha} \bar{y}_n \geq \sqrt{b_\alpha} (\epsilon^2 - \epsilon_1^2 - C \epsilon_1) \delta^2 b_\alpha
  \geq \frac{\epsilon^2}{2} \delta^2 b_\alpha^{3/2}
\]
if we let $\epsilon_1$ be small enough such that $\epsilon_1^2 + C \epsilon_1 \leq \epsilon/2$.
Next, for any $x \in \partial_3 \omega_\epsilon$, which means $|x_{\beta_0}| = \epsilon \delta b_\alpha/\sqrt{b_{\beta_0}}$
for some $\alpha + 1 \leq \beta_0 \leq n-1$, we find
\[
\begin{aligned}
v (x) \geq \,& \frac{1}{\sqrt{b_\alpha}} \sum_{\beta=\alpha+1}^{n-1} b_\beta y_\beta^2
   \geq \frac{1}{\sqrt{b_\alpha}} b_{\beta_0} y_{\beta_0}^2
      \geq \frac{b_{\beta_0}}{\sqrt{b_\alpha}} \left(\frac{1}{2} x_{\beta_0}^2 - (x_{\beta_0}^0)^2\right)\\
      \geq \,& \left(\frac{\epsilon}{2} - \epsilon_1^2\right) \delta^2 b_\alpha^{3/2}
      \geq \frac{\epsilon}{2} \delta^2 b_\alpha^{3/2}
\end{aligned}
\]
provided $\epsilon_1^2 \leq \epsilon/2$. Combining the above two inequalities and \eqref{jw-34}, we have
\[
|\nabla_\beta (u-\varphi)| \leq C v \mbox{ on } \partial \omega_\epsilon.
\]
Thus, by \eqref{jw-35}, \eqref{jw-36} and \eqref{jw-37}, there exist positive constants $A \gg B \gg 1$ such that
\[
\begin{aligned}
F^{ij} \left(\frac{A}{\sqrt{b_{\alpha}}} (\ul u - u) - B v \pm \nabla_\beta (u - \varphi)\right)_{ij} \geq \,& 0 \mbox{ in } \omega_\epsilon\\
  \frac{A}{\sqrt{b_{\alpha}}} (\ul u - u) - B v \pm \nabla_\beta (u - \varphi) \leq \,& 0 \mbox{ on } \partial \omega_\epsilon
\end{aligned}
\]
By the maximal principle we have
\[
\frac{A}{\sqrt{b_{\alpha}}} (\ul u - u) - B v \pm \nabla_\beta (u - \varphi) \leq 0 \mbox{ in } \omega_\epsilon
\]
and
\[
|\nu (\nabla_\beta u) (x_0)| \leq C \sqrt{b_{\alpha}}, \mbox{ for } 1\leq \beta \leq \alpha.
\]
We thus have proved
\[
|\nabla_{\nu \tau_\beta} u| \leq C \sqrt{b_{\alpha}} \mbox{ on } \partial \omega_{\epsilon_1} \cap \partial \Omega,
  \mbox{ for } 1\leq \beta \leq \alpha.
\]
By induction, \eqref{jw-18} is proved.

At the origin, we have
\begin{equation}
\label{jw-39}
\sigma_{k-1} (b) u_{nn} (0) - \sum_{\alpha \leq n-1} u_{n\alpha}^2 (0) \sigma_{k-2;\alpha} (b) + \sigma_k (b) = f(0).
\end{equation}
Note that, by \eqref{jw-18},
\begin{equation}
\label{jw-38}
\begin{aligned}
& \frac{\sum_{\alpha \leq n-1} u_{n\alpha}^2 \sigma_{k-2;\alpha} (b)}{\sigma_{k-1} (b)}\\
  \leq \,& C\frac{\sum_{\alpha =1}^{n-k} u_{n\alpha}^2 b_{n-k+2}\cdots b_{n-1}}{b_{n-k+1}\cdots b_{n-1}}
    + C\frac{\sum_{\alpha =n-k+1}^{n-1} u_{n\alpha}^2 b_{n-k+1}\cdots \hat{b}_\alpha \cdots b_{n-1}}{b_{n-k+1}\cdots b_{n-1}}\\
  \leq \,& C \frac{b_{n-k+1}\cdots b_{n-1}}{b_{n-k+1}\cdots b_{n-1}} = C.
\end{aligned}
\end{equation}
Combining \eqref{cor-1}, \eqref{jw-39} and \eqref{jw-38}, we obtain
\[
u_{nn} (0) \leq C.
\]

\bigskip

{\bf Acknowledgement.} The authors wish to thank Professor Bo Guan for many helpful discussions.

\end{document}